\documentclass[11pt]{article}
\usepackage{amsfonts}
\usepackage{amsmath}
\usepackage{amsthm}

\usepackage{tikz}
\newcommand*\circled[1]{\tikz[baseline=(char.base)]{
    \node[shape=circle,draw,inner sep=2pt] (char) {#1};}}

\makeatletter
\renewcommand*\env@matrix[1][*\c@MaxMatrixCols c]{%
  \hskip -\arraycolsep
  \let\@ifnextchar\new@ifnextchar
  \array{#1}}
\makeatother

\usepackage{amssymb}
\usepackage{enumerate}
\usepackage[latin1]{inputenc} 
\usepackage{color}
\usepackage{pifont}
\usepackage{multirow}
\usepackage[
paperwidth=210mm,paperheight=297mm, textwidth=170mm,textheight=260mm, lmargin=20mm,top=10mm,
]{geometry}
\usepackage{algorithm}
\usepackage{algorithmic}
\usepackage{mathdots}

\DeclareMathOperator \rank {rank}
\DeclareMathOperator \Mrank {Mrank}
\DeclareMathOperator \mrank {mrank}

\usepackage{arydshln,leftidx,mathtools}

\newtheorem{theorem}{Theorem}[section]
\newtheorem*{theoremWOnumbering2.4}{Theorem 2.4 (detailed version)}
\newtheorem*{theoremWOnumbering2.5}{Theorem 2.5 (detailed version)}
\newtheorem{proposition}[theorem]{Proposition}

\newtheorem{note}[theorem]{Note}
\newtheorem{remark}[theorem]{Remark}
\newtheorem*{remarkWOnumbering}{Remark}

\newtheorem{definition}[theorem]{Definition}
\newtheorem{corollary}[theorem]{Corollary}
\newtheorem{example}[theorem]{Example}
\newtheorem{lemma}[theorem]{Lemma}

\newenvironment{smat}{\left[\begin{smallmatrix}}{\end{smallmatrix}\right]}


\title{ACI-matrices of constant rank over arbitrary fields
  \footnote{{\bf Keywords:} affine column independent matrices; partial matrix; completion; rank; finite field.}
  \footnote{{\bf Mathematics subject classification:} 15A83} }
\author{Alberto Borobia\footnote{Supported by the Spanish Ministerio de Ciencia y Tecnología MTM2015-68805-REDT} , Roberto Canogar\\
\small Universidad Nacional de Educaci\'on a Distancia (UNED), 28040 Madrid, Spain\\
\small e-mail: $aborobia@mat.uned.es$, $rcanogar@mat.uned.es$ }
\date{}


\begin{document}

\maketitle

\begin{abstract}
The columns of a $m\times n$ ACI-matrix over a field $\mathbb{F}$  are independent affine subspaces of $\mathbb{F}^m$. An ACI-matrix has constant rank $\rho$ if all its completions have rank $\rho$. Huang and Zhan (2011) characterized the $m\times n$ ACI-matrices of constant rank when $|\mathbb{F}|\geq \min\{m,n+1\}$. We  complete their result characterizing the $m\times n$ ACI-matrices of constant rank over arbitrary fields. Quinlan and McTigue (2014) proved that every partial matrix of constant rank $\rho$ has a $\rho\times \rho$ submatrix of constant rank $\rho$ if and only $|\mathbb{F}|\geq \rho$. We  obtain an analogous result for ACI-matrices over arbitrary fields by introducing the concept of complete irreducibility.
 \end{abstract}

\section{Introduction}

 Let $\mathbb{F}[x_1, \ldots , x_k]$ denote  the set of polynomials in the   indeterminates $x_1,  \ldots , x_k$ with coefficients on a field $\mathbb{F}$. A matrix  over $\mathbb{F}[x_1, \ldots, x_k]$ is  an \textbf{A}ffine \textbf{C}olumn \textbf{I}ndependent matrix or {\bf ACI-matrix} if its entries are polynomials of degree at most one and  no indeterminate appears in two different columns.  A completion of an ACI-matrix is an assignment  of values in $\mathbb{F}$ to the indeterminates $x_1,\ldots,x_k$.  The ACI-matrices where introduced in 2010 by Brualdi, Huang and Zhan~\cite{MR2680270} as a generalization of  {\bf partial matrices} (matrices whose entries are either a constant or an indeterminate and with each indeterminate only appearing  once). They proposed in~\cite[Problem 5]{MR2680270} the problem  of determining those $m\times n$ ACI-matrices  such that the rank of any of  its completions  is equal to $\rho$ with $0 \leq \rho\leq \min \{m, n\}$.

\subsection{A geometric interpretation}

Let us consider a collection $\mathfrak{C}$ of $n+1$ affine  subspaces of $\mathbb{F}^m$ where  $\mathbb{F}$ is a field.   If we choose one point of each one of the $n+1$ affine subspaces of $\mathfrak{C}$ then the dimension of the affine subspace spanned by these $n+1$ points is an integer of the set $\{0,1,\ldots,\min \{m, n\}\}$. An interesting problem is to determine  for any  $\rho\in \{0,1,\ldots,\min \{m, n\}\}$  how are those collections $\mathfrak{C}=\{\mathcal{V}_0,\mathcal{V}_1,\ldots,\mathcal{V}_n\}$ such that
$$
\{\dim \langle P_0, P_1,\ldots, P_n\rangle : P_i\in \mathcal{V}_i \text{ for } i=0,1,\ldots,n\}=\{\rho\}.
$$
As we will see below this  question  for the particular case in which $\mathcal{V}_0$  is the origin of $\mathbb{F}^m$ coincides with the problem proposed by Brualdi, Huang and Zhan.

Let $\mathcal{V}_1, \ldots, \mathcal{V}_n$ be $n$ affine subspaces of $\mathbb{F}^m$.  If   $\mathcal{V}_j$  has  dimension $d_j$ then it can be parametrized, with respect to the canonical base of $\mathbb{F}^m$, by
\begin{align*}
\begin{bmatrix}
c^{(j)}_1 \\
 \vdots \\
 c^{(j)}_m
\end{bmatrix}+
x^{(j)}_1 \begin{bmatrix}
a^{(j)}_{11}  \\
\vdots  \\
a^{(j)}_{m1} 
\end{bmatrix}+\cdots+x^{(j)}_{d_j} \begin{bmatrix}
a^{(j)}_{1d_j}  \\
\vdots  \\
a^{(j)}_{md_j} 
\end{bmatrix}=
\begin{bmatrix}
c^{(j)}_1+\sum_{k=1}^{d_j} a^{(j)}_{1k} x^{(j)}_k \\
\vdots \\
c^{(j)}_m+\sum_{k=1}^{d_j}  a^{(j)}_{mk} x^{(j)}_k 
\end{bmatrix}.
\end{align*}
So, it seems quite natural to represent  the collection $\{\mathcal{V}_1, \ldots, \mathcal{V}_n\}$ by the $m\times n$ ACI-matrix
\begin{align} \label{genericACI}
A=\begin{bmatrix}[ccc]
c^{(1)}_1+\sum_{k=1}^{d_1} a^{(1)}_{1k} x^{(1)}_k  & \quad \cdots \quad &  c^{(n)}_1+\sum_{k=1}^{d_n} a^{(n)}_{1k} x^{(n)}_k  \\
\vdots & \ddots & \vdots \\
c^{(1)}_m+\sum_{k=1}^{d_1} a^{(1)}_{mk} x^{(1)}_k &\quad  \cdots \quad & c^{(n)}_m+ \sum_{k=1}^{d_n} a^{(n)}_{mk} x^{(n)}_k
\end{bmatrix}
\end{align}
where the column $j$ corresponds  to the affine subspace $\mathcal{V}_j$.  

A completion $\widehat{A}$ of the ACI-matrix $A$ given in~(\ref{genericACI}) is an  assignment  of  values in  $\mathbb{F}$ to each one of the indeterminates 
$$x^{(1)}_1,\ldots,x^{(1)}_{d_1}; \ldots\ldots;x^{(n)}_{1},\ldots,x^{(n)}_{d_n}.$$ 
Observe that the column $j$ of  $\widehat{A}$  corresponds to a point   $P_j\in \mathcal{V}_j$. Therefore if    $P_0=(0,\ldots,0)$ is the origin of $\mathbb{F}^m$ then
$$\rank (\widehat{A})=\dim \langle \overrightarrow{P_0 P_1},\ldots, \overrightarrow{P_0 P_n}\rangle=\dim \langle P_0, P_1,\ldots, P_n\rangle.$$
For any    $\rho\in \{0,1,\ldots,\min \{m, n\}\}$ the problem of determining those collections $\{ P_0, \mathcal{V}_1, \ldots, \mathcal{V}_n\}$ of affine subspaces  of $\mathbb{F}^m$ such that   $\dim\langle P_0, P_1,\ldots, P_n\rangle=\rho$ for any choice of points $P_j \in \mathcal{V}_j$ for $j=0,1,\ldots,n$  coincides  with the problem of determining those $m\times n$ ACI-matrices over $\mathbb{F}$  such that $\rho$ is the rank of any of  its completions.

\subsection{The rank of an ACI-matrix}

\begin{definition} Let $A$ be a $m\times n$ ACI-matrix over $\mathbb{F}$. The {\bf rank of} $A$, $\rank(A)$, is the set  of integers that are the rank of some  completion of $A$. The {\bf Mrank of} $A$, $\Mrank(A)$, is the highest rank of a completion of $A$, and the  {\bf mrank of} $A$, $\mrank(A)$, is the lowest rank of a completion of $A$.  We say that   $A$ has {\bf constant rank} $\rho$ if $\Mrank(A)=\mrank(A)=\rho$, that is, if $\rank(A)=\{\rho\}$.  
\end{definition}

An {\bf A}ffine {\bf C}olumn or \mbox{\bf A-column} of size $m$ is an ACI-matrix with  one column and $m$ rows. The ACI-matrices are described in terms of independent A-columns, where independent means that the A-columns share no variables. So  $\big[\, C_1\ \cdots \ C_n \,\big]$ is an $m\times n$ ACI-matrix if and only if $C_1,\ldots,C_n$ are  independent A-columns of size $m$. 
The use of A-columns  help us to introduce several  concepts that  appear when we consider  ACI-matrices of constant  rank. 

 \begin{definition} \label{mfr}
Let  $A=\big[\, C_1\ \cdots \ C_n \,\big]$ be an $m\times n$  ACI-matrix over $\mathbb{F}$ of  constant rank $\rho$.  We say that $A$ is  \textbf{full rank} if  $\rho=\min \{m, n\}$. We distinguish three special types of full rank ACI-matrices:
 \begin{itemize}

\item   $A$ is  {\bf square full rank} if  $\rho=n=m$.

\item   $A$ is {\bf minimal full rank} if  $\rho=m<n$   and  for each  $j\in\{1,\ldots,n\}$ the $m\times(n-1)$ ACI-matrix $$\big[\, C_1\ \cdots \ C_{j-1} \ C_{j+1} \ \cdots \ C_n \,\big]$$
 is not full rank (i.e., it is not of constant rank $\rho=m\leq n-1$).

\item  $A$ is  {\bf maximal full rank} if $\rho=n<m$  and  for each   $v\in \mathbb{F}^m$ the $m\times(n+1)$ ACI-matrix $$\big[\, C_1\ \cdots \ C_n \ v\, \big]$$ is not full rank (i.e., it is not of constant rank $\rho+1=n+1\leq m$).

\end{itemize}

\end{definition}

It is important to keep in mind that if $A$ is  minimal full rank  then it  has less rows than columns, and that if $A$ is  maximal full rank  then it has more  rows   than  columns.

\begin{example}\label{mfrMFR}
In~\cite{BoCa2} we  showed  that there exist minimal and maximal  full rank ACI-matrices over all finite fields. Namely, let $\mathbb{F}_q=\{f_1,\ldots,f_q\}$ be the field  with $q$ elements:
\begin{enumerate}[(i)]

\item  \cite[Example 3.1]{BoCa2}  The following  $2 \times (q+1)$  ACI-matrix over $\mathbb{F}_q$ is minimal full rank:
\begin{equation*} \label{basic-minimal} 
\left[\begin{array}{ccccc}
1+f_1x_1     & \cdots  & 1+f_q x_q & x_{q+1}  \\
x_1 &            \cdots & x_q &1 
\end{array}\right].
\end{equation*}

\item  \cite[Proposition 2.1]{BoCa2} The following  $2q \times (q+1)$  ACI-matrix over $\mathbb{F}_q$ is maximal full rank:
\begin{equation*}\label{basic-maximal}
\left[\begin{array}{ccc|c}
1 & \cdots  & 0 & x_{q+1}-f_1   \\
\vdots & \ddots  & \vdots &  \vdots     \\
0 & \cdots   & 1 & x_{q+1}-f_q \\ \hline
x_1 & \cdots  & 0 & 1 \\
\vdots & \ddots  & \vdots & \vdots  \\
0 & \cdots  & x_q & 1  \\ 
\end{array}\right].
\end{equation*}
\end{enumerate}
On the other hand, in~\cite[Corollaries 2.1 and 3.1]{BoCa2} we showed that  minimal or maximal  full rank ACI-matrices over infinite fields do not exist.

\end{example}

\begin{example}\label{examplemfrMFR} 
\begin{enumerate}[(i)]
\item  How to check that a given ACI-matrix is minimal full rank? Consider the $3\times 5$ ACI-matrix over $\mathbb{F}_2$
\[
A=\left[
\begin{array}{ccccc}
   1 & y_2 & y_3 & 0 & 0 \\
  0 & 0 & y_3 & y_4 & 1 \\
  y_1 & 1 & 1 & 1 & y_5 \\
\end{array}
\right].
\]
Note  that $A$  is full rank  since it has 5 variables and admits $2^5$ different completions, all of them of rank 3. Moreover,   for each  $i=1,2,3,4,5$ if we delete the $i^{th}-$column  the resulting $3\times 4$ ACI-matrix is not full rank since it admits a completion of rank 2.  So $A$ is minimal full rank. 


\item Now we  point at a sensitive property of the definition of maximal full rank ACI-matrices.  The $5 \times 3$ ACI-matrix  over $\mathbb{F}_2$ 
$$A=\begin{bmatrix}  x_1 & 1 & 1 \\  1 & 0 & x_4 \\ x_1 & 0 & x_4 \\ x_2 & 0 & 1 \\ 0 & x_3 & 1 \end{bmatrix}$$
has constant rank 3 since all its  $2^4$ completions have rank equal to 3. Let $\{e_1,e_2,e_3,e_4,e_5\}$ be the canonical base of the vectorial space $\mathbb{F}^5_2$.  It can be checked that 
$$\rank\begin{bmatrix}  A & e_1 \end{bmatrix}=\rank\begin{bmatrix}  A & e_2 \end{bmatrix}=\rank\begin{bmatrix}  A & e_3 \end{bmatrix}=\rank\begin{bmatrix}  A & e_4 \end{bmatrix}=\rank\begin{bmatrix}  A & e_5 \end{bmatrix}=\{3,4\}.$$
None of these five  augmented ACI-matrices is full rank. Does this  imply that $A$ is maximal full rank? No,  since 
$$\rank \begin{bmatrix}  x_1 & 1 & 1 & 1\\  1 & 0 & x_4 & 0 \\ x_1 & 0 & x_4 & 1 \\ x_2 & 0 & 1 & 1 \\ 0 & x_3 & 1 & 0\end{bmatrix}=\{4\}.$$
In linear algebra it is usually enough to check a property for a basis to conclude that this property is true for all vectors. Although this is not the case when one wants to check that an ACI-matrix is maximal full rank.
\end{enumerate}

\end{example}

\subsection{Equivalent ACI-matrices} \label{equivACIs}

Assume that in the ACI-matrix $A=\big[\, C_1\ \cdots \ C_n \,\big]$ the A-columns $C_1,\ldots,C_n$ are parametrized with respect to the canonical base of $\mathbb{F}^m$. If we consider a different base of $\mathbb{F}^m$ then the parametrization of  $C_1,\ldots,C_n$ with respect to this new base changes, although geometrically  $C_1,\ldots,C_n$ do not change. This  new parametrization  is obtained by multiplying $A$ from the left by a nonsingular constant matrix of order $m$.  Note also  that the order of the columns of $A$ has no impact on its rank.  These two observations motivate us to introduce in a natural way the  terminology of equivalent ACI-matrices.

 \begin{definition} 
Two ACI-matrices   $A$ and $B$ of the same size $m\times n$ are {\bf equivalent}, $A \sim B$, if there exist a nonsingular constant  $T$ of order $m$ and a permutation $Q$ of order $n$ such that  $TAQ=B$. 
\end{definition}
The use of the permutation $Q$ in the definition is  not essential,  but it is useful. It permits to reorganize the columns of an ACI-matrix so that  its structure  becomes more apparent. 
\begin{remark} \label{rowEchelonForm}
Given an $m\times n$ constant matrix $A$ of rank $\rho$, it is well know that there exists a nonsingular constant $T$ of order $m$ such  that $TA$ is in row reduced echelon form. This is know as the Gauss elimination method. Moreover, there exists a permutation $Q$ of order $n$ such that 
$$TAQ=\scriptsize{\left[\begin{array}{ccc|cccc}
 1 &  & 0 &   \multicolumn{1}{c}{\multirow{3}{*}{\Large $\ * \ $}}   \\ 
  & \ddots & &    \\ 
 0 & & 1 &   \\ \hline
 \multicolumn{3}{c|}{\multirow{2}{*}{\Large $\ 0 \ $}} &  \multicolumn{1}{c}{\multirow{2}{*}{\Large $\ 0 \ $}} \\
 &&&
\end{array}\right]}
$$
where we group together the $\rho$ columns corresponding to the pivots in the first $\rho$ columns. The blocks on the right do not appear if $\rho=n$ and the blocks on the bottom do not appear if $\rho=m$. So we have found a representative for the equivalence class of $A$ with a simple structure. The equivalence for ACI-matrices of constant rank is, of course, an extension of the equivalence for constant matrices, and it is  introduced with the idea of finding a representative with a simple structure that reveals its rank. Obviously  the rank of an ACI-matrix is preserved by equivalence.
\end{remark}

 In our following  result we will  see  that also minimality  and  maximality  are preserved by equivalence.

\begin{lemma} \label{MaxAndMin}
Let $A$ and $B$ be equivalent ACI-matrices. We have that:
\begin{enumerate}[(i)]
\item $A$ is   minimal full rank  if and only if $B$ is minimal full rank.
\item $A$ is   maximal full rank  if and only if $B$ is maximal full rank.
\end{enumerate}
\end{lemma}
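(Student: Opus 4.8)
The plan is to first isolate the one fact that drives the whole proof: \emph{equivalence of ACI-matrices preserves the set of ranks of completions.} Indeed, if $TAQ=B$ with $T$ nonsingular of order $m$ and $Q$ a permutation of order $n$, then $\widehat A\mapsto T\widehat A Q$ is a bijection from the completions of $A$ onto those of $B$, and $\rank(T\widehat A Q)=\rank(\widehat A)$ because left multiplication by a nonsingular matrix and a column permutation do not change the rank of a constant matrix. Hence $A$ and $B$ have the same rank set; in particular $A$ has constant rank $\rho$ iff $B$ does, and since $m$ and $n$ are common to $A$ and $B$, each of the conditions ``$\rho=\min\{m,n\}$'', ``$\rho=m<n$'', ``$\rho=n<m$'' and ``$\rho=m=n$'' holds for $A$ iff it holds for $B$. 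In particular, full rank and square full rank are preserved, so to prove (i) and (ii) it remains only to transfer the deletion condition in the definition of minimal full rank and the augmentation condition in the definition of maximal full rank; and since $\sim$ is symmetric it suffices to establish one implication in each case.

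For (i), let $A=[\,C_1\ \cdots\ C_n\,]$ be minimal full rank and $B=TAQ$. The columns of $B$ are exactly $TC_1,\dots,TC_n$ arranged in the order dictated by $Q$. Hence, for each column of $B$, deleting it produces an ACI-matrix of the form $T\,[\,C_{j_1}\ \cdots\ C_{j_{n-1}}\,]$ where $j_1,\dots,j_{n-1}$ is some enumeration of $\{1,\dots,n\}\setminus\{j\}$ and $j$ is the index of the removed column; up to a column permutation this is $T\,[\,C_1\ \cdots\ C_{j-1}\ C_{j+1}\ \cdots\ C_n\,]$, so it is equivalent to the latter and thus has the same rank set, which by hypothesis is not $\{m\}$. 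Therefore it is not full rank, and as the deleted column ranges over all columns of $B$ this proves that $B$ is minimal full rank.

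For (ii), let $A$ be maximal full rank and $B=TAQ$, and fix $w\in\mathbb F^m$; we must show $[\,B\ w\,]$ is not full rank. Writing $[\,B\ w\,]=T\,[\,AQ\ T^{-1}w\,]=T\,[\,A\ T^{-1}w\,]\,\widetilde Q$, where $\widetilde Q$ is the permutation of order $n+1$ acting as $Q$ on the first $n$ columns and fixing the last, we see that $[\,B\ w\,]$ is equivalent to $[\,A\ T^{-1}w\,]$ (here $T^{-1}w$ is a constant vector, so this is indeed an ACI-matrix), hence shares its rank set; since $A$ is maximal full rank, $[\,A\ T^{-1}w\,]$ is not full rank, and therefore neither is $[\,B\ w\,]$. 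As $w$ runs over $\mathbb F^m$ so does $T^{-1}w$, so every one-column augmentation of $B$ fails to be full rank, i.e.\ $B$ is maximal full rank. The whole argument is bookkeeping and I expect no genuine obstacle; the only point that needs care is this last step, where one must observe that augmenting $B$ by an arbitrary $w$ amounts to augmenting $A$ by $T^{-1}w$ and that $w\mapsto T^{-1}w$ is a bijection of $\mathbb F^m$ --- precisely what converts the ``for all $v$'' in the hypothesis on $A$ into the ``for all $w$'' needed for $B$.
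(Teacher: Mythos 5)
Your proposal is correct and follows essentially the same route as the paper: both rest on the fact that left multiplication by a nonsingular constant matrix and column permutation preserve the rank set of completions, and then transfer the column-deletion condition via the column correspondence $B=TAQ$ and the augmentation condition via $v\leftrightarrow T^{-1}v$. The only cosmetic difference is that you package the rank-preservation fact as a preliminary observation and invoke symmetry of $\sim$ explicitly, whereas the paper writes out the chains of rank equalities directly in each item.
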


\begin{proof}  
Let   $m\times n$ be the size of $A$ and $B$. As  $A \sim B$ then  there exist a nonsingular constant  $T$ of order $m$ and a permutation $Q$ of order $n$ such that $B=TAQ$.  
\begin{enumerate}[$(i)$]
\item  Assume that $A$ is minimal full rank. Then $B$ is full rank since 
\begin{align*}
\rank(B) &=\rank(TAQ)=\{\rank(T \widehat{A}Q) : \widehat{A} \text{ completion of $A$}\}\\ &=\{\rank(\widehat{A}) : \widehat{A} \text{ completion of $A$}\}=\rank(A)=\{m\}.
\end{align*}

Let us see now that $B$ is minimal full rank. First we introduce some useful notation:  $C_k(H)$ will denote the ACI-matrix obtained by deleting the column $k$ of the ACI-matrix $H$. 

As $B=TAQ$ then  the columns of $B$ are obtained by permuting the columns of $TA$.    Let $\sigma$ be a permutation of $\{1,\ldots,n\}$  such that  for each $j\in \{1,\ldots,n\}$  the column $j$ of $B$ is equal  to the column $\sigma(j)$ of $TA$.     So  $B$ is  minimal full rank since
\begin{equation*} \label{Bminimalfullrank}
rank\big(C_j(B)\big)=rank \big(C_{\sigma(j)}(TA)\big)=rank \big(TC_{\sigma(j)}(A)\big)=rank  \big(C_{\sigma(j)}(A)\big)\neq\{m\}
\end{equation*}
where the last inequality  follows from the fact that    $A$ is  minimal full rank.  

\item Assume that $A$ is maximal full rank. Then $B$ is full rank since, as in item $(i)$,  
$$\rank(B)=\rank(TAQ)=\rank(A)=\{n\}.$$
We conclude that  $B$ is maximal full rank since  for each $v\in \mathbb{F}^m$ we have
$$rank \Big( \big[B \ |\  v\big] \Big)=rank \Big( T^{-1}\big[ B \ |\  v \big]\Big)=rank \Big( \big[ T^{-1}B \ |\  T^{-1}v \big]\Big)=  $$ $$= rank  \Big( \big[AQ \ |\  T^{-1}v \big]\Big)=rank \Big( \big[ A \ |\  T^{-1}v \big]\Big) \neq \{n+1\}.$$
where the last inequality follows from the fact that    $A$ is  maximal full rank.  
\end{enumerate}

\end{proof}

\section{ACI-matrices of constant rank over arbitrary fields}

We start with a basic result that  will be employed several times in this work.

\begin{lemma} \label{SC+FRC=constant-rank}
Consider an ACI-matrix  $\begin{smat} A_{11} & A_{12} \\  0_{r\times s} & A_{22}\end{smat}$   where $r>m$, $s>n$ and  $r+s\geq \max\{m,n\}$.   The following two statements are equivalent:
\begin{enumerate}[(i)]
\item  $\rank\begin{smat} A_{11} & A_{12} \\  0_{r\times s} & A_{22}\end{smat}=\{(m-r)+(n-s)\}$.
\item $\rank(A_{11})=\{m-r\}$ and $\rank(A_{22})=\{n-s\}$.
\end{enumerate}
\end{lemma}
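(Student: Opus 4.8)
The plan is to push everything through a handful of elementary rank inequalities for block matrices of the shape $\begin{smat} B & C \\ 0 & D\end{smat}$, with no row or column reduction at all. Write $M=\begin{smat} A_{11} & A_{12} \\ 0 & A_{22}\end{smat}$ for the ACI-matrix in question; I read its ambient size as $m\times n$, so that $A_{11}$ is $(m-r)\times s$ and $A_{22}$ is $r\times(n-s)$. Set $\rho_1:=m-r$ and $\rho_2:=n-s$, so $\rho_1$ is the number of rows of $A_{11}$ and $\rho_2$ the number of columns of $A_{22}$. The hypothesis $r+s\ge\max\{m,n\}$ is exactly $\rho_1\le s$ and $\rho_2\le r$ (equivalently $\rho_1+\rho_2\le\min\{m,n\}$), i.e.\ it says that $A_{11}$ can attain full row rank $\rho_1$ and $A_{22}$ full column rank $\rho_2$. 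In particular $\Mrank(A_{11})\le\rho_1$ and $\Mrank(A_{22})\le\rho_2$ always hold, so $(ii)$ is equivalent to: $\mrank(A_{11})=\rho_1$ and $\mrank(A_{22})=\rho_2$.

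For an arbitrary completion $\widehat{M}=\begin{smat}\widehat{A}_{11} & \widehat{A}_{12}\\ 0 & \widehat{A}_{22}\end{smat}$ I will use three bounds. $(a)$ We have $\rank(\widehat{M})\ge\rank(\widehat{A}_{11})+\rank(\widehat{A}_{22})$, because gluing a maximal nonsingular minor of $\widehat{A}_{11}$ to one of $\widehat{A}_{22}$ produces a block-triangular, hence nonsingular, minor of $\widehat{M}$ of that size. $(b)$ Splitting $\widehat{M}$ into its top $\rho_1$ rows and bottom $r$ rows gives the \emph{row bound} $\rank(\widehat{M})\le\rank[\,\widehat{A}_{11}\ \widehat{A}_{12}\,]+\rank(\widehat{A}_{22})\le\rho_1+\rank(\widehat{A}_{22})$, the last step because $[\,\widehat{A}_{11}\ \widehat{A}_{12}\,]$ has only $\rho_1$ rows. $(c)$ Splitting $\widehat{M}$ into its left $s$ columns and right $\rho_2$ columns gives the \emph{column bound} $\rank(\widehat{M})\le\rank(\widehat{A}_{11})+\rank\begin{smat}\widehat{A}_{12}\\ \widehat{A}_{22}\end{smat}\le\rank(\widehat{A}_{11})+\rho_2$, the last step because $\begin{smat}\widehat{A}_{12}\\ \widehat{A}_{22}\end{smat}$ has only $\rho_2$ columns. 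Each of $(b)$ and $(c)$ is just subadditivity of rank under splitting into two row-blocks, resp.\ two column-blocks.

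Given these, $(ii)\Rightarrow(i)$ is immediate: under $(ii)$ every completion has $\rank(\widehat{A}_{11})=\rho_1$ and $\rank(\widehat{A}_{22})=\rho_2$, so $(a)$ forces $\rank(\widehat{M})\ge\rho_1+\rho_2$ and $(b)$ forces $\rank(\widehat{M})\le\rho_1+\rho_2$; hence $\rank(M)=\{\rho_1+\rho_2\}$. For $(i)\Rightarrow(ii)$ I argue by contraposition. If $\rank(A_{11})\ne\{\rho_1\}$ then, since $\Mrank(A_{11})\le\rho_1$, some completion of $A_{11}$ has rank $\le\rho_1-1$; extend it to a completion $\widehat{M}$ of $M$ — legitimate because the indeterminates of $A_{11}$ sit in columns of $M$ disjoint from those of $A_{12}$ and $A_{22}$, so they can be assigned independently of the rest, and this is the one place the defining property of ACI-matrices is used. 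Then $(c)$ gives $\rank(\widehat{M})\le(\rho_1-1)+\rho_2<\rho_1+\rho_2$, so $M$ is not of constant rank $\rho_1+\rho_2$. Symmetrically, if $\rank(A_{22})\ne\{\rho_2\}$, extend a completion of $A_{22}$ of rank $\le\rho_2-1$ to a completion $\widehat{M}$ of $M$ (now $A_{22}$ may share indeterminates with $A_{12}$, but the extension still exists) and apply $(b)$ to get $\rank(\widehat{M})<\rho_1+\rho_2$. Either way $(i)$ fails, which is the contrapositive.

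I do not expect a genuine obstacle. The two points that need attention are: resisting the temptation to perform Gaussian elimination, and instead observing that the crude super-/subadditivity in $(a)$--$(c)$, combined with the size constraints $\rho_1\le s$ and $\rho_2\le r$, already pins $\rank(\widehat{M})$ down exactly; and checking that a completion of a single diagonal block really does extend to a completion of the whole ACI-matrix, which is precisely where the hypothesis that no indeterminate occurs in two columns is needed.
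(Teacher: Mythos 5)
Your proposal is correct and takes essentially the same route as the paper's proof: both directions rest on the same block row/column subadditivity bounds for $\begin{smat} B & C \\ 0 & D\end{smat}$ together with the size constraints $m-r\le s$ and $n-s\le r$ coming from $r+s\ge\max\{m,n\}$, your contrapositive phrasing of $(i)\Rightarrow(ii)$ and your use of superadditivity in place of the paper's full-column-rank replacement step being only cosmetic variations. Your explicit check that a completion of a diagonal block extends to a completion of the whole ACI-matrix (where the column-independence of the indeterminates is used) is a point the paper leaves implicit.
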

\begin{proof}  Observe that $A_{11}$ is $(m-r)\times s$ with $m-r\leq s$ and that  $A_{22}$ is $r\times (n-s)$ with $n-s\leq r$. Let $\begin{smat} \widehat{A_{11}} & \widehat{A_{12}} \\  0_{r\times s} & \widehat{A_{22}}\end{smat}$  be any completion of $\begin{smat} A_{11} & A_{12} \\  0_{r\times s} & A_{22}\end{smat}$.

$(i) \Rightarrow (ii)$  We have that 
$${\small (m-r)+(n-s)
=\rank\begin{bmatrix} \widehat{A_{11}} & \widehat{A_{12}} \\  0_{r\times s} & \widehat{A_{22}} \end{bmatrix}
\leq \rank(\widehat{A_{11}})+\rank(\begin{bmatrix}  \widehat{A_{12}} \\  \widehat{{A}_{22}}  \end{bmatrix})\leq (m-r)+(n-s)}$$

then $\rank(\widehat{A_{11}})=m-r$ and so $A_{11}$ has constant rank $m-r$.
$${\small (m-r)+(n-s)=\rank\begin{bmatrix} \widehat{A_{11}} & \widehat{A_{12}} \\  0_{r\times s} & \widehat{A_{22}} \end{bmatrix}\leq \rank\begin{bmatrix} \widehat{A_{11}} & \widehat{A_{12}}  \end{bmatrix}+\rank(\begin{bmatrix}    \widehat{{A}_{22}}  \end{bmatrix})\leq (m-r)+(n-s)}$$

then $\rank(\widehat{A_{22}})=n-s$ and $A_{22}$ has constant rank $n-s$.

$(ii) \Rightarrow (i)$ The rank of $\widehat{A_{22}}$ is equal to the  number of columns of $\begin{smat}  \widehat{A_{12}} \\  \widehat{{A}_{22}}  \end{smat}$. Then we have  
$${\small \rank\begin{bmatrix} \widehat{A_{11}} & \widehat{A_{12}} \\  0_{r\times s} & \widehat{A_{22}} \end{bmatrix}=\rank\begin{bmatrix} \widehat{A_{11}} & 0 \\  0_{r\times s} & \widehat{A_{22}} \end{bmatrix}=\rank(\widehat{A_{11}})+\rank(\widehat{A_{22}})=(m-r)+(n-s).}$$

And so $\rank\begin{smat} A_{11} & A_{12} \\  0_{r\times s} & A_{22}\end{smat}=\{(m-n)+(n-s)\}$.

\end{proof}

For a better understanding of  the structure of the constant rank ACI-matrices we will make use of the following result of Brualdi, Huang and Zhan~\cite[Theorem 3]{MR2680270}.

\begin{theorem}(\cite{MR2680270}) \label{SubmatrizCeros}
Let $A$ be an $m \times n$  ACI-matrix over an arbitrary field $\mathbb{F}$ and let $\rho$ be an integer such that $1 \leq \rho < \min \{m, n\}$.  The following two statements are equivalent:
\begin{enumerate}[(i)]
\item $\Mrank(A)\leq \rho$.
\item  For some positive integers  $r$ and $s$  with  $\rho=(m-r)+(n-s)$ there exist a nonsingular constant $T$ of order $m$ and a permutation  $Q$ of order $n$ such that  $TAQ= \small{\left[\begin{array}{cc}A_{11} & A_{12} \\  0_{r\times s} & A_{22} \end{array}\right]}$.   The upper blocks $A_{11}$ and $A_{12}$ do not appear if $r=m$ and the right blocks  $A_{21}$ and $A_{22}$ do not appear if $s=n$. 
\end{enumerate}
\end{theorem}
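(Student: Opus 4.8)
For $(ii)\Rightarrow(i)$: if $TAQ=\begin{smat}A_{11}&A_{12}\\0_{r\times s}&A_{22}\end{smat}$ then every completion $T\widehat AQ$ is a constant $m\times n$ matrix with an $r\times s$ zero block in the lower-left corner; its first $s$ columns span a space of dimension $\le m-r$ and its remaining $n-s$ columns add at most $n-s$, so $\rank(\widehat A)=\rank(T\widehat AQ)\le(m-r)+(n-s)=\rho$. Hence $\Mrank(A)\le\rho$.

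The direction $(i)\Rightarrow(ii)$ is the substance, and I would first trade the combinatorial hypothesis for one about the rank of $A$ over a rational function field. Expanding a $k\times k$ minor of an ACI-matrix, each term is a product of $k$ entries taken from $k$ \emph{distinct} columns; as distinct columns involve disjoint indeterminates and every entry has degree $\le 1$, each monomial occurring in the minor is squarefree, i.e.\ every minor is a \emph{multilinear} polynomial. A nonzero multilinear polynomial over any field cannot vanish identically: writing $P=x_tQ+R$ with $Q,R$ free of $x_t$, the polynomial $P(\,\cdot\,,x_t)$ is affine in $x_t$ and so vanishes for all values of $x_t$ only if $Q$ and $R$ both vanish, and one finishes by induction on the number of indeterminates. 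Therefore $\Mrank(A)\le\rho$ forces every $(\rho{+}1)\times(\rho{+}1)$ minor of $A$ to be the zero polynomial, i.e.\ $\rho':=\rank_{\mathbb K}(A)\le\rho$ where $\mathbb K$ is the rational function field over $\mathbb F$ in all indeterminates of $A$. It now suffices to prove: for every $m\times n$ ACI-matrix with $\rho':=\rank_{\mathbb K}(A)<\min\{m,n\}$ there is a proper $\mathbb F$-subspace $U\subsetneq\mathbb F^m$ with $\dim U+\#\{\,j:\mathcal V_j\not\subseteq U\,\}\le\rho'$, where $\mathcal V_j\subseteq\mathbb F^m$ is the affine subspace parametrized by the $j$-th A-column. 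Given such $U$, group the $s=\#\{j:\mathcal V_j\subseteq U\}$ columns contained in $U$ first and pick a basis adapted to $U$: this exhibits a zero block $0_{r\times s}$ with $r=m-\dim U$, and shrinking $r$ so as to make $(m-r)+(n-s)=\rho$ is legal because $\rho<m$, which gives $(ii)$.

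The plan for the subspace statement is induction on $m+n$; the case $\rho'=0$ (hence $A=0$) is settled by $U=\{0\}$. In general $\rho'<n$, so the columns $C_1,\ldots,C_n$ of $A$ are $\mathbb K$-linearly dependent; choose a circuit $S$ (a minimal $\mathbb K$-dependent set of columns), so $\rank_{\mathbb K}\{C_j:j\in S\}=|S|-1$ and a dependence $\sum_{j\in S}\mu_jC_j=0$ with $\mu_j\in\mathbb F[x_1,\ldots]$ has full support. The heart of the argument is that the columns of $S$ lie in a common $\mathbb F$-subspace of dimension exactly $|S|-1$: differentiating the relation with respect to an indeterminate $x^{(j_0)}_k$ of a column $j_0\in S$ gives $\mu_{j_0}a^{(j_0)}_k=-\sum_{j\in S}(\partial\mu_j/\partial x^{(j_0)}_k)\,C_j$, which lies in $V^S_{\mathbb K}:=\langle C_j:j\in S\rangle_{\mathbb K}$; dividing by $\mu_{j_0}\ne0$ shows $a^{(j_0)}_k\in V^S_{\mathbb K}$, and then $c^{(j_0)}=C_{j_0}-\sum_k x^{(j_0)}_ka^{(j_0)}_k\in V^S_{\mathbb K}$ as well, so the $\mathbb F$-span $U_S$ of all constant and direction vectors of the columns of $S$ consists of vectors of the $(|S|-1)$-dimensional $\mathbb K$-space $V^S_{\mathbb K}$, forcing $\dim_{\mathbb F}U_S\le|S|-1$ (the reverse inequality is clear, and equality holds). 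Three cases: (a) if $\rho'=n-1$, take $U=U_S$, since $\dim U_S+\#\{j:\mathcal V_j\not\subseteq U_S\}\le(|S|-1)+(n-|S|)=\rho'$; (b) if $\rho'<n-1$ and $|S|=1$, then $C_j$ is a zero column — delete it, recurse on the $m\times(n-1)$ matrix (still in range), and re-adjoin the zero column at no cost; (c) if $\rho'<n-1$ and $|S|\ge2$, pass to a basis adapted to $U_S$ and reorder columns so $A\sim\begin{smat}A_{11}&A_{12}\\0_{r\times s}&A_{22}\end{smat}$ with $s=|S|$, $r=m-(|S|-1)$; here $A_{11}$ has full row rank $|S|-1$ over $\mathbb K$ (its $|S|$ columns span $V^S_{\mathbb K}$, which in this basis is $\mathbb K^{|S|-1}\times\{0\}$), so by the block-triangular inequality $\rank_{\mathbb K}(A_{22})\le\rho'-(|S|-1)$. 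Then $A_{22}$ is a strictly smaller ACI-matrix satisfying the hypothesis (one checks $\rho'-(|S|-1)<\min\{r,\,n-s\}$ using $\rho'<m$ and $\rho'<n-1$), so recursion yields a proper $U''\subsetneq\mathbb F^{r}$ with $\dim U''+\#\{\text{columns of }A_{22}\text{ not contained in }U''\}\le\rho'-(|S|-1)$; taking $U=\mathbb F^{|S|-1}\oplus U''$ in the new basis gives $\dim U+\#\{j:\mathcal V_j\not\subseteq U\}\le(|S|-1)+(\rho'-(|S|-1))=\rho'$, with $U$ proper since $\dim U''\le r-1$.

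I expect the circuit lemma and its accounting to be the main obstacle. The subtle point is that one must differentiate the dependence relation \emph{only} along indeterminates of columns lying in $S$: this keeps the resulting identities inside $V^S_{\mathbb K}$ and hence localizes the constant and direction vectors of $S$ to an $\mathbb F$-subspace of exactly the right small dimension, whereas differentiating along an outside column's indeterminate would merely place those vectors in the full column span of $A$ and give only the useless bound $\le\rho'$ on the span of \emph{all} columns. The second point requiring separate treatment is the boundary value $\rho'=n-1$ (case (a)): there the recursion of case (c) would return a matrix $A_{22}$ whose generic rank equals its number of columns, escaping the inductive range, so it must be dispatched by hand. The remaining ingredients — the multilinearity reduction, the block-triangular inequality $\rank\begin{smat}B&C\\0&D\end{smat}\ge\rank B+\rank D$, the bookkeeping needed to lift $U''$ back through the change of basis, and the final padding that turns $(m-r)+(n-s)\le\rho$ into equality — are routine.
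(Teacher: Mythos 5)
Your proposal is correct as far as I can check, but there is nothing in this paper to measure it against: Theorem~\ref{SubmatrizCeros} is imported from Brualdi, Huang and Zhan \cite{MR2680270} and no proof of it is given here, so what you have written is a self-contained independent argument rather than a variant of anything internal to the paper. Your two key ingredients are sound over an arbitrary field: every minor of an ACI-matrix is multilinear because each Leibniz term takes one affine entry from each of several columns with pairwise disjoint indeterminates, and a nonzero multilinear polynomial has a nonvanishing evaluation even over $\mathbb{F}_2$, so $\Mrank(A)\leq\rho$ is indeed equivalent to the generic rank over the function field $\mathbb{K}$ being at most $\rho$; and the circuit lemma holds, the formal differentiation of $\sum_{j\in S}\mu_j C_j=0$ being legitimate in any characteristic since only $\mathbb{F}$-linearity, the product rule and $\mu_{j_0}\neq 0$ in $\mathbb{K}$ are used, which places all constant and direction vectors of the columns of $S$ inside the $(|S|-1)$-dimensional space $V^S_{\mathbb{K}}$ and yields $\dim_{\mathbb{F}}U_S=|S|-1$ (the lower bound coming from $V^S_{\mathbb{K}}\subseteq U_S\otimes_{\mathbb{F}}\mathbb{K}$). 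The bookkeeping you defer does close: $s\geq 1$ because $\dim U+(n-s)\leq\rho<n$ forces $\dim U<s$; shrinking $r$ to reach $\rho=(m-r)+(n-s)$ keeps $r\geq m-\rho\geq 1$; in your case (c) one has $|S|-1\leq\rho'$, $r=m-|S|+1\geq 1$, $n-|S|\geq 1$, and $\rank_{\mathbb{K}}(A_{22})\leq\rho'-(|S|-1)<\min\{r,n-s\}$ from $\rho'<m$ and $\rho'<n-1$, so the induction on $m+n$ is well founded and properness of $U''$ lifts to properness of $U$. The only steps I would ask you to write out explicitly are the equivalence ``$\mathcal{V}_j\subseteq U$ if and only if the constant vector and all direction vectors of column $j$ lie in $U$'', which is what converts the subspace statement into the actual zero block after a change of basis, and the remark that the final $r,s$ respect the stated conventions when $r=m$ or $s=n$. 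Relative to the source being cited, your reduction to generic rank plus the matroid-style circuit analysis appears to be a genuinely different and rather more conceptual route than the matrix-completion induction one would extract from \cite{MR2680270}, at the price of invoking the function field; it has the virtue of making the arbitrariness of $\mathbb{F}$ completely transparent.
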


\bigskip
Observe that   $\rho=(m-r)+(n-s)$ and $\rho < \min \{m, n\}$ implies that 
$$r+s=m+n-\rho> \max\{m,n\} $$
So $m-r<s$ and  and $n-s<r$. Therefore, in part $(ii)$ of Theorem~\ref{SubmatrizCeros}, $A_{11}$ has less rows than columns   and   $A_{22}$  has less columns than rows.   With all this in mind, an immediate consequence of Theorem~\ref{SubmatrizCeros} and Lemma~\ref{SC+FRC=constant-rank} is the following result for    ACI-matrices of constant rank.

\pagebreak

\begin{corollary} \label{SubmatrizCeros3}
Let $A$ be an $m \times n$  ACI-matrix over an arbitrary field $\mathbb{F}$ and let $\rho$ be an integer such that $1\leq  \rho < \min \{m, n\}$.  The following two statements are equivalent:
\begin{enumerate}[(i)]
\item $\rank(A)=\{\rho\}$.
\item For some positive integers  $r$ and $s$  with  $\rho=(m-r)+(n-s)$ there exist a nonsingular constant $T$ of order $m$ and a permutation  $Q$ of order $n$ such that   $TAQ= \small{\left[\begin{array}{cc}A_{11} & A_{12} \\  0_{r\times s} & A_{22} \end{array}\right]}$. The upper blocks  $A_{11}$ and $A_{12}$  do not appear if $r=m$ and the right blocks  $A_{21}$ and $A_{22}$ do not appear if $s=n$. Moreover,  if $r<m$  then  $\rank(A_{11})=\{m-r\}$   and    if  $s<n$ then $\rank(A_{22})=\{n-s\}$.
\end{enumerate}
\end{corollary}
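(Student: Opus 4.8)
The plan is to derive this as a direct consequence of Theorem~\ref{SubmatrizCeros} (applied twice, to $A$ and to $-A$ or equivalently to the rank bounds from both sides) together with Lemma~\ref{SC+FRC=constant-rank}. The key observation is that $\rank(A)=\{\rho\}$ is equivalent to the conjunction $\Mrank(A)\leq\rho$ and $\mrank(A)\geq\rho$, and since $\rho<\min\{m,n\}$ we are in the range where Theorem~\ref{SubmatrizCeros} applies.

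First I would prove $(i)\Rightarrow(ii)$. Assume $\rank(A)=\{\rho\}$. In particular $\Mrank(A)\leq\rho$, so Theorem~\ref{SubmatrizCeros} gives positive integers $r,s$ with $\rho=(m-r)+(n-s)$ and a nonsingular $T$ of order $m$ and permutation $Q$ of order $n$ with $TAQ=\begin{smat}A_{11}&A_{12}\\0_{r\times s}&A_{22}\end{smat}$. As noted in the text preceding the corollary, $\rho<\min\{m,n\}$ forces $r+s>\max\{m,n\}$, hence $m-r<s$ and $n-s<r$; also $r,s>0$ means $r>m-\rho$ — more to the point, writing $r'=r$, $s'=s$ we have $r>m-n+\ldots$; in any case the hypotheses $r>m-r$?—no: the hypothesis of Lemma~\ref{SC+FRC=constant-rank} is stated with $r>m$, $s>n$, which is a typo for the situation where $A_{11}$ is $(m-r)\times s$ etc.; I will apply it in the form actually used, namely with blocks of the stated sizes and $m-r<s$, $n-s<r$. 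Since equivalence preserves rank (Remark~\ref{rowEchelonForm}), $\rank\begin{smat}A_{11}&A_{12}\\0_{r\times s}&A_{22}\end{smat}=\rank(A)=\{\rho\}=\{(m-r)+(n-s)\}$, so by the implication $(i)\Rightarrow(ii)$ of Lemma~\ref{SC+FRC=constant-rank} we get $\rank(A_{11})=\{m-r\}$ (when $r<m$, i.e.\ when the block is present) and $\rank(A_{22})=\{n-s\}$ (when $s<n$). The degenerate cases $r=m$ or $s=n$ are handled by the corresponding clause of Theorem~\ref{SubmatrizCeros}.

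For $(ii)\Rightarrow(i)$: given the decomposition $TAQ=\begin{smat}A_{11}&A_{12}\\0_{r\times s}&A_{22}\end{smat}$ with $\rank(A_{11})=\{m-r\}$ and $\rank(A_{22})=\{n-s\}$, apply the implication $(ii)\Rightarrow(i)$ of Lemma~\ref{SC+FRC=constant-rank} to conclude $\rank\begin{smat}A_{11}&A_{12}\\0_{r\times s}&A_{22}\end{smat}=\{(m-r)+(n-s)\}=\{\rho\}$, and then invoke invariance of rank under equivalence to get $\rank(A)=\{\rho\}$. Again the cases $r=m$ or $s=n$ collapse to one-block statements covered directly by Lemma~\ref{SC+FRC=constant-rank} (or are trivial).

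The only genuinely delicate point — the ``main obstacle'' — is reconciling the index conventions: Theorem~\ref{SubmatrizCeros} produces $r,s$ from the \emph{one-sided} bound $\Mrank(A)\leq\rho$, and I must check that these same $r,s$ feed correctly into Lemma~\ref{SC+FRC=constant-rank}, in particular that the size inequalities ($m-r\leq s$ and $n-s\leq r$, which here are strict) are exactly the ones the lemma needs, and that $\mrank(A)\geq\rho$ is automatically recovered rather than needing a separate application of the theorem to the ``dual'' side. In fact it is: once the block form with the stated zero block exists, every completion has rank $\geq (m-r)+(n-s)=\rho$ by the subadditivity estimate already used in the proof of Lemma~\ref{SC+FRC=constant-rank}, so the lower bound is free. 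Thus no second invocation of Theorem~\ref{SubmatrizCeros} is needed, and the whole corollary is a clean splice of the theorem with the lemma.
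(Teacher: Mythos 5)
Your proposal is correct and follows essentially the same route as the paper, which presents this corollary precisely as an immediate consequence of Theorem~\ref{SubmatrizCeros} combined with Lemma~\ref{SC+FRC=constant-rank}, using the same observation that $\rho<\min\{m,n\}$ forces $m-r<s$ and $n-s<r$ so the lemma applies to the block form. One small caveat: your closing remark that the zero block alone forces every completion to have rank at least $(m-r)+(n-s)$ is not true as stated (the lower bound needs $\rank(A_{11})=\{m-r\}$ and $\rank(A_{22})=\{n-s\}$, which Lemma~\ref{SC+FRC=constant-rank} supplies), but your actual argument never relies on that claim.
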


\subsection{Sufficient and necessary condition for ACI-matrices of constant rank}

Huang and Zhan in~\cite[Theorem 5]{MR2775784} characterized the  $m\times n$ ACI-matrices of constant rank over a field  $\mathbb{F}$  with $|\mathbb{F}|\geq \max\{m,n+1\}$.

\begin{theorem}(\cite{MR2775784})\label{ResultadosAnteriores0}
Let $A$ be a  $m\times n$ ACI-matrix  over a  field $\mathbb{F}$ with $|\mathbb{F}|\geq \max\{m,n+1\}$.  Then $A$ has constant rank $\rho$  if and only if 
\begin{equation}\label{MatrixWithFixRank21}
A\sim  \small{\left[\begin{array}{ccc}
B & * & *  \\ 
0 & 0  & * \\ 
0  & 0 & C
\end{array}\right]}
\end{equation}
for some ACI-matrices   $B$ and $C$ which are square upper triangular    with nonzero constant diagonal entries   and whose orders sum to $\rho$. 
\end{theorem}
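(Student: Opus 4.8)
The plan is to prove Theorem~\ref{ResultadosAnteriores0} by reducing to the block structure of Corollary~\ref{SubmatrizCeros3} and then inducting on the size of the matrix. The reverse implication is the easy direction: if $A$ is equivalent to the displayed block matrix, then any completion of $A$ has the same rank as a completion of $\left[\begin{smallmatrix} B & * & * \\ 0 & 0 & * \\ 0 & 0 & C\end{smallmatrix}\right]$, and since $B$ and $C$ are square upper triangular with nonzero diagonal, their completions are invertible; a cofactor/block-triangular expansion shows every completion has rank exactly $\rho = \operatorname{order}(B) + \operatorname{order}(C)$, independent of the starred entries. So I would dispatch that direction first in a short paragraph, using Lemma~\ref{SC+FRC=constant-rank}-style reasoning to peel off the $C$ block and then the $B$ block.

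For the forward implication, I would argue by induction on $\min\{m,n\}$, or more precisely on $m+n$. The base cases are when $\rho = \min\{m,n\}$, i.e.\ $A$ is full rank: here the hypothesis $|\mathbb{F}| \geq \max\{m, n+1\}$ is exactly the regime in which (by the results of~\cite{BoCa2} cited in Example~\ref{mfrMFR}) minimal and maximal full rank ACI-matrices do not exist, so a full rank ACI-matrix with $\rho = n < m$ can always be extended by a constant column to a full rank $m\times(n+1)$ ACI-matrix, and dually a full rank ACI-matrix with $\rho = m < n$ has a full rank $m\times(n-1)$ submatrix; iterating, one reduces to the square full rank case, and a square full rank ACI-matrix over a field this large can be brought by equivalence to upper triangular form with nonzero constant diagonal (this is essentially the content of the Huang--Zhan analysis of the square case, and it is where the field-size bound does its real work). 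That handles the case where one of $B$, $C$ is empty.

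For the inductive step, suppose $1 \leq \rho < \min\{m,n\}$. By Corollary~\ref{SubmatrizCeros3}, after replacing $A$ by an equivalent matrix we may write $A = \left[\begin{smallmatrix} A_{11} & A_{12} \\ 0_{r\times s} & A_{22}\end{smallmatrix}\right]$ with $\operatorname{rank}(A_{11}) = \{m-r\}$ and $\operatorname{rank}(A_{22}) = \{n-s\}$, where $A_{11}$ is $(m-r)\times s$ with fewer rows than columns and $A_{22}$ is $r\times(n-s)$ with fewer columns than rows; both are full rank ACI-matrices of strictly smaller size. I would then apply the full-rank analysis from the previous paragraph (or, if they are themselves of the form where $\rho$ is not the minimum dimension — which cannot happen here since they are full rank — the induction hypothesis) to bring $A_{11}$ to the form $[\,B\ *\,]$ with $B$ square upper triangular and nonzero diagonal, and $A_{22}$ to the form $\left[\begin{smallmatrix} * \\ C\end{smallmatrix}\right]$ with $C$ square upper triangular and nonzero diagonal; the point is that $m - r = \operatorname{order}(B)$ and $n - s = \operatorname{order}(C)$, so $\operatorname{order}(B) + \operatorname{order}(C) = \rho$. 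Substituting these normal forms back into the block matrix and absorbing the transformations into $T$ and $Q$ yields precisely the shape~(\ref{MatrixWithFixRank21}), possibly after one more column/row permutation to line up the pivot columns of $B$ on the far left and those of $C$ on the bottom.

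The main obstacle is the square full rank case: showing that a square ACI-matrix over $\mathbb{F}$ with $|\mathbb{F}|$ large whose every completion is invertible must be equivalent to an upper triangular ACI-matrix with nonzero constant diagonal. The natural approach is to look at the first column $C_1$: since all completions are invertible, $C_1$ cannot be identically $0$ under any assignment, and a counting argument over the (sufficiently large) field shows there is a direction $v$ such that after a change of basis $C_1$ becomes $e_1 + (\text{entries involving only }C_1\text{'s variables below the first row})$ — or rather one shows some completion of $C_1$ is a nonzero vector that can be rotated to $e_1$, and that the variables in $C_1$ can be eliminated. One then deletes the first row and first column, checks the resulting $(n-1)\times(n-1)$ ACI-matrix is still square full rank, and recurses. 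Making the field-size bookkeeping precise here — ensuring at each stage the residual field-size hypothesis still suffices — is the delicate part, and it is where I would expect to lean most heavily on the hypotheses $|\mathbb{F}| \geq \max\{m,n+1\}$ and on the nonexistence results for minimal/maximal full rank matrices over such fields.
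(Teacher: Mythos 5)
A preliminary remark: the paper does not prove Theorem~\ref{ResultadosAnteriores0} at all --- it is quoted from Huang and Zhan~\cite{MR2775784} --- so there is no internal proof to compare with. What the paper does prove is the arbitrary-field analogue (Theorem~\ref{ResultadosAnteriores3}), and there your reduction of the case $1\le\rho<\min\{m,n\}$ is exactly the paper's strategy: apply Corollary~\ref{SubmatrizCeros3} to reach $\left[\begin{smallmatrix} A_{11} & A_{12}\\ 0 & A_{22}\end{smallmatrix}\right]$ with $A_{11}$, $A_{22}$ full rank of smaller size, normalize each block by the rectangular full-rank cases, and reassemble (you should also treat the degenerate alternatives $r=m$ or $s=n$ of Corollary~\ref{SubmatrizCeros3}, where one of $B$, $C$ is void, but that is routine). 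The reverse implication as you sketch it is fine.

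The genuine gap is in your base cases, which carry all the content. You dispatch the rectangular full-rank cases ($\rho=m<n$ and $\rho=n<m$) by asserting that minimal and maximal full rank ACI-matrices cannot exist when $|\mathbb{F}|\ge\max\{m,n+1\}$, citing the results recalled in Example~\ref{mfrMFR}. But the cited nonexistence results (Corollaries 2.1 and 3.1 of~\cite{BoCa2}) are stated only for \emph{infinite} fields; for a finite field with $|\mathbb{F}|\ge\max\{m,n+1\}$ the statement ``one can always delete a column of a constant-rank-$m$ matrix, resp.\ adjoin a constant column to a constant-rank-$n$ matrix, preserving constant rank'' is essentially equivalent to items $(ii)$ and $(iii)$ of the very theorem you are proving, so as written the argument is circular, or at least missing its key ingredient: an actual counting/geometric argument showing that the field-size bound rules out minimal and maximal full rank matrices of the relevant sizes. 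Relatedly, you misplace where the hypothesis ``does its real work'': by~\cite{BoCa1} (item $(i)$ of Theorem~\ref{ResultadosAnteriores3}) the square full-rank case holds over \emph{arbitrary} fields with no size restriction, whereas the field bound is needed precisely in the rectangular cases you assumed away. Finally, your sketch of the square case (rotate a completion of the first column to $e_1$, ``eliminate the variables of $C_1$'', recurse) leaves unproved the one nontrivial step, namely why the variables of an A-column can be removed below the pivot by row operations when every completion is nonsingular; that would need to be argued before the recursion is legitimate.
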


We remark that in~(\ref{MatrixWithFixRank21}) some block rows or/and block columns may be void.
Now in the next theorem we will  rewrite Theorem~\ref{ResultadosAnteriores0} making these degenerate cases more explicit by dividing the result in  different  cases  depending on the relation of $m$ and  $n$ with $\rho$. We will use square upper triangular ACI-matrices  with all its diagonal entries equal to 1  instead of square upper triangular ACI-matrices with nonzero constant diagonal entries. It is clear that this change can be done.

\begin{theoremWOnumbering2.4}
Let $A$ be a  $m\times n$ ACI-matrix of constant rank $\rho$ with $1 \leq \rho \leq \min \{m, n\}$ over a field  $\mathbb{F}$ with $|\mathbb{F}|\geq \max\{m,n+1\}$. Depending on  $m$, $n$ and $\rho$ we have the following possibilities:

\begin{enumerate}[(i)]
\item $\rho=m=n$ if and only if  
$
A \sim \begin{smat}  1 & & * \\   & \ddots &  \\ 0 & & 1 \end{smat}.
$

\item $\rho=m<n$ if and only if $ A \sim \scriptsize{ \begin{bmatrix}[ccc|cc] \nonumber
 1 &  & * &  \\ 
  & \ddots & & *  \\ 
 0 & & 1 &   
\end{bmatrix}}$.
\item $\rho=n<m$ if and only if  $ A \sim \scriptsize{\begin{bmatrix} 
  & * &  \\ \hline
 1 &  & *\\ 
  & \ddots &  \\
 0 &  & 1 \\ 
\end{bmatrix}}$.

\item $1\leq \rho<\min\{m,n\}$ if and only if for some positive integers $r$ and $s$ with $r+s=m+n-\rho$   
\begin{equation} \label{HZtype}
A \sim 
{\scriptsize \left[\begin{array}{cccc|ccc}
1 &  & \multicolumn{1}{c|}{*} & \multicolumn{1}{c|}{\multirow{3}{*}{\Large $\ * \ $}} &  \multicolumn{3}{c}{\multirow{3}{*}{\Large $*$}} \\ 
  & \ddots & \multicolumn{1}{c|}{} &  \\ 
 0 & & \multicolumn{1}{c|}{1} &  &  & & \\ \hline
 \multicolumn{4}{c|}{\multirow{4}{*}{\Large$0_{r\times s}$}} & \multicolumn{3}{c}{\multirow{1}{*}{\Large $*$}}\\ 
&&&&&& \\
 \cline{5-7}
 & & & & 1 &  & *\\ 
 & & & &  & \ddots &  \\
 & & & & 0 &  & 1 \\ 
\end{array}\right]}
\end{equation}
 where the upper blocks  do not appear if  $r=m$ and   the right blocks do not appear if  $s=n$. 
 \end{enumerate}
\end{theoremWOnumbering2.4}

\begin{remarkWOnumbering}
Note that if $\mathbb{F}$ is an infinite field then $|\mathbb{F}|\geq \max\{m,n+1\}$ and items $(i)$ to $(iv)$ are  satisfied.  In~\cite{BoCa1} we proved that  item $(i)$   is  true for any  field $\mathbb{F}$ without the restriction $|\mathbb{F}|\geq \max\{m,n+1\}$.   In~\cite[Lemma 2.1 and Lemma 3.1]{BoCa2}  we proved  the existence of   minimal full rank ACI-matrices and of maximal full rank ACI-matrices over  all finite fields  (see Example~\ref{mfrMFR}), and we showed that  if  $A$ is minimal  full rank   and  $B$ is   maximal  full rank  then  
$$A\not\sim {\scriptsize \begin{bmatrix}[ccc|cc] \nonumber
 1 &  & * & \\ 
  & \ddots & & *  \\ 
 0 & & 1 &   
\end{bmatrix}} \qquad \text{ and } \qquad  B \not \sim {\scriptsize \begin{bmatrix} 
  & * &  \\ \hline
 1 &  & *\\ 
  & \ddots &  \\
 0 &  & 1 \\ 
\end{bmatrix}}.$$
Therefore the results of items $(ii)$ and $(iii)$  of Theorem~\ref{ResultadosAnteriores0} can not be extended  to finite fields $\mathbb{F}$ disregarding completely the restriction on $|\mathbb{F}|$.  Moreover, in~\cite[Theorem 4.1 and Theorem 4.2]{BoCa2}   we  characterized the full rank ACI-matrices over arbitrary fields  with the help of the minimal and the maximal full rank ACI-matrices as can be seen in items  $(ii)$ and $(iii)$ of Theorem~\ref{ResultadosAnteriores3} below.  
\end{remarkWOnumbering}


It is worthy to mention the different approach taken by  McTigue and Quinlan (see~\cite{McTigueQuinlan3} and~\cite[Corollary 6.1]{McTigueTesis}). They proved that if a partial matrix $P$ with constant rank $\rho$ is equivalent to an ACI-matrix of the types found in $(i), (ii), (iii)$ and $(iv)$ of Theorem~\ref{ResultadosAnteriores0}, then $P$ has a $\rho \times \rho$ submatrix of constant rank $\rho$. On the other hand,  for any prime power $q$  they constructed   a  $(q+1)\times 2q$ partial matrix $A_q$ over $\mathbb{F}_q$ that has  constant rank $q+1$ and has no $(q+1)\times (2q-1)$ submatrix of constant rank $q+1$ (for instance $A_2= \begin{smat}
 1 & 1 & x_2 & 0 \\
 1 & 0 & 0 & x_3 \\ 
 1 & x_1 & 1 & 1 
 \end{smat}$). So $A_q$ has no $(q+1)\times (q+1)$ submatrix of constant rank $q+1$ and Theorem~\ref{ResultadosAnteriores0} can not be extended to $\mathbb{F}_q$ disregarding completely the restriction on $|\mathbb{F}_q|$. It is important to realize that  the mentioned conditions for $A_q$ imply that $A_q$ is  minimal full rank and so, interestingly, both approaches have led us to the same type of matrices.

%

One of our main objectives in this work is to complete the characterization of the  $m\times n$ ACI-matrices of constant rank $\rho$ over arbitrary fields.  This is done  in our next result that includes the case when $\rho<\min\{m,n\}$

\begin{theorem}\label{ResultadosAnteriores3}
Let $A$ be a  $m\times n$ ACI-matrix  over an arbitrary  field $\mathbb{F}$.  Then $A$ has constant rank $\rho$   if and only if 
\begin{equation}\label{MatrixWithFixRank7}
A\sim  \small{\left[\begin{array}{ccc}
B & * & *  \\ 
0 & 0  & * \\ 
0  & 0 & C
\end{array}\right]}
\end{equation}
for some ACI-matrices $B$ and $C$ such that  $B$  is   square upper triangular   with nonzero constant diagonal entries  or  is minimal full rank;  $C$  is  square upper triangular  with nonzero constant diagonal entries  or is maximal full rank; and  the number of rows of $B$ plus the number of columns of $C$ is $\rho$.  \end{theorem}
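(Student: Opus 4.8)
The plan is to prove the two implications separately, using the already established full rank characterizations to dispatch the case $\rho=\min\{m,n\}$ and Corollary~\ref{SubmatrizCeros3} together with those same characterizations for $\rho<\min\{m,n\}$. Throughout I write $\beta$ for the number of rows of $B$ and $\gamma$ for the number of columns of $C$.

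\emph{The ``if'' direction.} Suppose $A\sim\begin{smat}B&*&*\\0&0&*\\0&0&C\end{smat}$ with $B,C$ as in the statement, so $\beta+\gamma=\rho$. As rank is invariant under equivalence I may assume $A$ equals this block matrix. When $B$ and $C$ are both nonvoid, I would regroup its first two block columns into one block column and its last two block rows into one block row, obtaining the shape $\begin{smat}B'&*\\0&C'\end{smat}$ in which the lower-left block is an honest $(m-\beta)\times(n-\gamma)$ zero block, $B'=[\,B \mid *\,]$ is the restriction to the rows of $B$, and $C'=\begin{smat}*\\C\end{smat}$ is the restriction to the columns of $C$. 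Here $B'$ has constant rank $\beta$, because every completion of $B'$ contains a completion of $B$ of (full) rank $\beta$ — a square upper triangular matrix with nonzero diagonal and a minimal full rank matrix both have constant rank equal to their number of rows — and $\beta$ is the row count of $B'$; dually $C'$ has constant rank $\gamma$. Lemma~\ref{SC+FRC=constant-rank} then yields $\rank(A)=\{\beta+\gamma\}=\{\rho\}$. If $B$ is void the first two block columns of $A$ vanish and $\rank(A)=\{\gamma\}=\{\rho\}$ by inspection; the case $C$ void is symmetric, and $\rho=0$ is trivial.

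\emph{The ``only if'' direction: reduction.} Suppose $\rank(A)=\{\rho\}$ with $\rho\geq1$ (the case $\rho=0$ being trivial). If $\rho=\min\{m,n\}$, then $A$ is full rank and I would invoke the arbitrary-field characterizations directly: $\rho=m=n$ from \cite{BoCa1}, and $\rho=m<n$, $\rho=n<m$ from \cite[Theorems 4.1 and 4.2]{BoCa2}; in each case $A$ is equivalent to a matrix of the form~(\ref{MatrixWithFixRank7}) with one of $B,C$ void. So assume $1\leq\rho<\min\{m,n\}$. Corollary~\ref{SubmatrizCeros3} gives positive integers $r,s$ with $\rho=(m-r)+(n-s)$ and $A\sim\begin{smat}A_{11}&A_{12}\\0_{r\times s}&A_{22}\end{smat}$, where $\rank(A_{11})=\{m-r\}$ and $\rank(A_{22})=\{n-s\}$ for the nonvoid blocks; and from $r+s=m+n-\rho>\max\{m,n\}$ we get $m-r<s$ and $n-s<r$, so $A_{11}$ (when nonvoid) is a full rank ACI-matrix with strictly more columns than rows and $A_{22}$ (when nonvoid) is a full rank ACI-matrix with strictly more rows than columns. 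Applying \cite[Theorem 4.1]{BoCa2} to $A_{11}$ gives a nonsingular $T_1$ and a permutation $Q_1$ with $T_1A_{11}Q_1=[\,B \mid *\,]$, where $B$ is square upper triangular with nonzero diagonal or minimal full rank and has $m-r$ rows; applying \cite[Theorem 4.2]{BoCa2} to $A_{22}$ gives $T_2,Q_2$ with $T_2A_{22}Q_2=\begin{smat}*\\C\end{smat}$, where $C$ is square upper triangular with nonzero diagonal or maximal full rank and has $n-s$ columns.

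\emph{The ``only if'' direction: assembly, and the main obstacle.} Finally I would carry the two corner reductions back into the block matrix at once: left-multiply by $\mathrm{diag}(T_1,I_r)$ and permute only among the first $s$ columns by $Q_1$, then left-multiply by $\mathrm{diag}(I_{m-r},T_2)$ and permute only among the last $n-s$ columns by $Q_2$. Because these transformations are block diagonal and neither column permutation mixes the first $s$ columns with the last $n-s$, the zero block $0_{r\times s}$ is untouched; splitting the bottom $r$ rows into those meeting $C$ and the rest then exhibits $A$ as equivalent to a matrix of the form~(\ref{MatrixWithFixRank7}) with $\beta=m-r$ and $\gamma=n-s$, hence $\beta+\gamma=\rho$. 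The degenerate subcases $r=m$ ($B$ void) and $s=n$ ($C$ void) are immediate. This assembly step is the only real obstacle: one must check that the corners $A_{11}$ and $A_{22}$ can be brought to their canonical shapes simultaneously without disturbing $0_{r\times s}$, which is precisely what forces the conjugating matrices to be block diagonal and the column permutations to respect the split into the first $s$ and the last $n-s$ columns.
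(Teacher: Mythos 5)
Your proposal is correct and follows essentially the same route as the paper: for $\rho<\min\{m,n\}$ it invokes Corollary~\ref{SubmatrizCeros3}, applies the full-rank characterizations of \cite{BoCa1} and \cite[Theorems 4.1 and 4.2]{BoCa2} to the corner blocks $A_{11}$ and $A_{22}$, and reassembles with block-diagonal $\mathrm{diag}(T_1,T_2)$ and $\mathrm{diag}(Q_1,Q_2)$ so the zero block is preserved, exactly as in the paper's proof of the detailed version of Theorem~\ref{ResultadosAnteriores3}. Your explicit ``if'' direction via Lemma~\ref{SC+FRC=constant-rank} is also the computation the paper relies on (it appears, e.g., inside the proof of Theorem~\ref{FixedRankIntoCD2}), so there is no substantive difference.
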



We again remark that in~(\ref{MatrixWithFixRank7}) some block rows or/and block columns may be void.
 In the next theorem we will  rewrite Theorem~\ref{ResultadosAnteriores3} making these degenerate cases more explicit by dividing the result in  different  cases  depending on the relation of $m$ and  $n$ with $\rho$. 
This will facilitate the proof of the result. And we will use square upper triangular ACI-matrices  with all its diagonal entries equal to 1  instead of square upper triangular ACI-matrices with nonzero constant diagonal entries.


\begin{theoremWOnumbering2.5}
Let $A$ be a  $m\times n$ ACI-matrix of constant rank $\rho$ with $1 \leq \rho \leq \min \{m, n\}$ over an arbitrary  field $\mathbb{F}$. Depending on  $m$, $n$ and $\rho$ we have the following possibilities:
\begin{enumerate}[(i)]
\item $\rho=m=n$ if and only if 
$
A \sim \begin{smat}  1 & & * \\   & \ddots &  \\ 0 & & 1 \end{smat}.
$

\item  $\rho=m<n$ if and only if $ A \sim \small{\begin{bmatrix}  B & * \, \end{bmatrix}}$ where either  $B= { \begin{smat} 
 1 &  & *  \\ 
  & \ddots &   \\ 
 0 & & 1  
\end{smat}}$ or     $B$ is $m\times n'$ minimal full rank   with $m <  n'\leq n$.

\item  $\rho=n<m$ if and only if  $ A \sim {\begin{smat}  * \\ \\   C   \end{smat}}$ where either $C= {\begin{smat} 
 1 &  & *\\ 
  & \ddots &  \\
 0 &  & 1 \\ 
\end{smat}}$ or $C$ is $m'\times n$ maximal full rank   with $n< m' \leq m$.

\item $ \rho<\min\{m,n\}$ if and only if one of the following possibilities is satisfied:

\begin{enumerate}[$(a)$]
\item there exist positive integers $r<m$ and $s<n$ with $\rho=(m-r)+(n-s)$ such that  
\begin{equation*}\label{MatrixWithFixRank2}
A\sim  \small{\left[\begin{array}{cc|c}
B & * & *  \\ \hline
\multicolumn{2}{c|}{\multirow{2}{*}{$0_{r\times s}$}} & * \\ 
 & & C
\end{array}\right]}
\end{equation*}
 where $\small{\begin{bmatrix}  B & * \, \end{bmatrix}}$ has less rows than columns with either   $B=\begin{smat}  1 & & * \\   & \ddots &  \\ 0 & & 1 \end{smat}$ or $B$ is  minimal full rank, and ${\begin{smat}  *  \\  \\  C   \end{smat}}$ has more rows than columns with either $C=\begin{smat}  1 & & * \\   & \ddots &  \\ 0 & & 1 \end{smat}$ or $C$ is   maximal full rank.
 
\item there exists a positive integer $r<m$  with $\rho=m-r$ such that $A\sim  \small{\left[\begin{array}{cc}
B & *   \\ \hline
\multicolumn{2}{c|}{\multirow{1}{*}{$0_{r\times n}$}}  
\end{array}\right]}$ where $\small{\begin{bmatrix}  B & * \, \end{bmatrix}}$ has less rows than columns with either   $B=\begin{smat}  1 & & * \\   & \ddots &  \\ 0 & & 1 \end{smat}$ or $B$ is  minimal full rank.

\item there exists a positive integer $s<n$  with $\rho=n-s$ such that $A\sim  \small{\left[\begin{array}{cc|c}
\multicolumn{2}{c|}{\multirow{2}{*}{$0_{m\times s}$}} & * \\ 
 & & C \end{array}\right]}$  where
${\begin{smat}  *  \\  \\  C   \end{smat}}$ has more rows than columns with either $C=\begin{smat}  1 & & * \\   & \ddots &  \\ 0 & & 1 \end{smat}$ or $C$ is   maximal full rank.
\end{enumerate}

 \end{enumerate}
\end{theoremWOnumbering2.5}

\begin{proof}
Item $(i)$ was proved in~\cite[Theorem 3.1]{BoCa1} and items $(ii)$ and $(iii)$ were proved  in~\cite[Theorems 4.1 and 4.2]{BoCa2}. Let us prove item $(iv)$:
\begin{description}
\item[$\Rightarrow)$] By Corollary~\ref{SubmatrizCeros3},  for some positive integers $r$ and $s$ with $\rho=(m-r)+(n-s)$ there exist a nonsingular constant $T$ of order $m$ and a permutation matrix $Q$ of order $n$ such that 
$$TAQ =  \small{\left[\begin{array}{c|c}A_{11} & A_{12} \\ \hline 0_{r\times s} & A_{22} \end{array}\right]}$$ 
where   the upper blocks  do not appear if  $r=m$ and   the right blocks do not appear if  $s=n$.

Corollary~\ref{SubmatrizCeros3} asserts that if $r<m$ then   $A_{11}$  is  $(m-r)\times s$ with $\rank(A_{11})=\{m-r\}$. As   $m-r=\rho-n+s<s$ then by item $(ii)$ of this theorem, there exist a nonsingular constant  $T_1$ of order $m-r$ and a permutation matrix $Q_1$ of order $s$ such that $ T_1A_{11}Q_1 = \small{\begin{bmatrix}  B & * \, \end{bmatrix}}$ where  either $B=\begin{smat}  1 & & * \\   & \ddots &  \\ 0 & & 1 \end{smat}$ or $B$ is a $(m-r) \times s'$  minimal full rank with $m-r <  s'\leq s$.

 Corollary~\ref{SubmatrizCeros3} also asserts that if $s<n$ then     $A_{22}$ is $r\times (n-s)$  with $\rank(A_{22})=\{n-s\}$. As   $n-s=\rho-m+r<r$ then by item  $(iii)$ of this theorem, there exist a nonsingular constant  $T_2$ of order $r$ and a permutation matrix $Q_2$ of order $n-s$ such that $ T_2A_{22}Q_2 =  {\begin{smat}  *  \\  \\  C   \end{smat}}$ where   either $C=\begin{smat}  1 & & * \\   & \ddots &  \\ 0 & & 1 \end{smat}$ or $C$ is a $r'\times (n-s)$ maximal full rank with $n-s< r' \leq r$. 

Observe that $r=m$ and $s=n$  is not  possible. So we consider  three  cases:
\begin{enumerate}[$(a)$]
\item if $r<m$ and $s<n$ then 
$$A\sim  TAQ \sim \small{\left[\begin{array}{c|c}T_{1} & 0 \\ \hline 0 & T_{2} \end{array}\right] TAQ 
\left[\begin{array}{c|c}Q_{1} & 0 \\ \hline 0 & Q_{2} \end{array}\right] =
\left[\begin{array}{c|c}T_1A_{11}Q_1 & * \\ \hline 0_{r\times s} & T_2A_{22}Q_2 \end{array}\right]=
\left[\begin{array}{cc|c}
B & * & *  \\ \hline
\multicolumn{2}{c|}{\multirow{2}{*}{$0_{r\times s}$}} & * \\ 
 & & C
\end{array}\right]}.$$ 

\item  if $r<m$ and $s=n$ then 
\[
A\sim  
\small{\left[\begin{array}{c|c}T_{1} & 0 \\ \hline 0 & I_r \end{array}\right]} (TAQ) Q_1 =
\small{\left[\begin{array}{c|c}T_{1} & 0 \\ \hline 0 & I_r \end{array}\right]} \left[\begin{array}{c} A_{11} \\ \hline 0_{r\times n} \end{array}\right] Q_1 =
\left[\begin{array}{c} T_1 A_{11} Q_1 \\ \hline 0_{r\times n} \end{array}\right]=
\left[\begin{array}{c} B \; \; \; \; *  \\ \hline 0_{r\times n} \end{array}\right].
\]

\item if $r=m$ and $s<n$ then
\[
A\sim  
T_2 (TAQ) \left[\begin{array}{c|c}I_s & 0 \\ \hline 0 & Q_{2} \end{array}\right] =
T_2 \left[\begin{array}{c|c} 0_{m\times s} & A_{22} \end{array}\right] \left[\begin{array}{c|c}I_s & 0 \\ \hline 0 & Q_{2} \end{array}\right] =
\left[\begin{array}{c|c} 0_{m\times s} & T_2A_{22}Q_2 \end{array}\right]=
\left[\begin{array}{cc|c}
\multicolumn{2}{c|}{\multirow{2}{*}{$0_{m\times s}$}} & * \\ 
 & & C
\end{array}\right].
\] 
\end{enumerate}

\item[$\Leftarrow)$] We will prove  for each one of the three  cases that $\rho<\min\{m,n\}$:
\begin{enumerate}[$(a)$]

\item By hypothesis 
$A\sim  \small{\left[\begin{array}{cc|c}
B & * & *  \\ \hline
\multicolumn{2}{c|}{\multirow{2}{*}{$0_{r\times s}$}} & * \\ 
 & & C
\end{array}\right]}$. 
As $\small{\begin{bmatrix}  B & * \, \end{bmatrix}}$
 has less rows than  columns then $m-r<s$ and  so
 $$
 \rho=(m-r)+(n-s)<s+(n-s)=n.
 $$
On the other hand, as ${\begin{smat}  *  \\  \\  C   \end{smat}}$
 has more rows than columns then  $r>n-s$ and  so 
 $$
 \rho=(m-r)+(n-s)<(m-r)+r=m.
 $$

\item By hypothesis $A\sim  \small{\left[\begin{array}{cc}
B & *   \\ \hline
\multicolumn{2}{c|}{\multirow{1}{*}{$0_{r\times n}$}}  
\end{array}\right]}$.
As $\small{\begin{bmatrix}  B & * \, \end{bmatrix}}$ has less rows than  columns then $m-r<n$, so $\rho=m-r  <\min\{m,n\}.$

\item By hypothesis
$A\sim  \small{\left[\begin{array}{cc|c}
\multicolumn{2}{c|}{\multirow{2}{*}{$0_{m\times s}$}} & * \\ 
 & & C \end{array}\right]}$.
 As  ${\begin{smat}  *  \\  \\  C   \end{smat}}$ has more rows than columns then       $n-s<m$, so 
 $\rho=n-s  <\min\{m,n\}.$
\end{enumerate}
\end{description}
\end{proof}

\begin{note} 
If $A$ is a constant matrix   then $A$ is equivalent to one of the following constant matrices: 
$$
\scriptsize{ \begin{bmatrix}[ccc] \nonumber
 1 &  & 0  \\ 
  & \ddots &    \\ 
 0 & & 1   
\end{bmatrix}}, \ 
 \scriptsize{ \begin{bmatrix}[ccc|cc] \nonumber
 1 &  & 0 &  \\ 
  & \ddots & & *  \\ 
 0 & & 1 &   
\end{bmatrix}},  \ 
 \scriptsize{\begin{bmatrix} 
 1 &  & 0\\ 
  & \ddots &  \\
 0 &  & 1 \\ \hline
   & 0 &  \\ 
\end{bmatrix}}, \  \scriptsize{\left[\begin{array}{ccc|c}
 1 &  & 0 &  \\ 
  & \ddots & & * \\
 0 &  & 1 &  \\ \hline 
 & 0 & & 0
\end{array}\right]}
$$
which are  in row reduced echelon form and correspond, respectively, to the patterns  given in  items $(i)$, $(ii)$, $(iv)(b)$ and $(iv)(a)$ of Theorem~\ref{ResultadosAnteriores3}. So, in a sense Theorems~\ref{ResultadosAnteriores0}  and~\ref{ResultadosAnteriores3} provide a generalization of the row reduced echelon form (extended by columns permutation) for ACI-matrices.
\end{note}

\subsection{An example}

Consider the  $7\times 7$ ACI-matrix  over $\mathbb{F}_2$ 
\begin{align*}
A=\left[
\begin{array}{ccccccc}
 x_1+y_1 & x_2+1 & 1 & x_4 & 0 & x_6+1 & x_7 \\
 x_1 & 1 & 1 & x_4 & 1 & x_6+1 & x_7 \\
 x_1 & x_2+1 & 0 & 0 & x_5 & 0 & 0 \\
 y_1+1 & y_2 & x_3 & y_4 & 0 & x_6 & 1 \\
 0 & x_2+y_2+1 & x_3 & y_4 & 0 & x_6 & 1 \\
 1 & x_2 & 1 & x_4 & 0 & x_6+1 & x_7 \\
 y_1 & x_2+y_2+1 & x_3 & y_4 & x_5 & x_6 & 1 \\
\end{array}
\right]
\end{align*}
It has $10$ variables and each variable can take 2 values. With a computer it is easy to calculate the rank of the $2^{10}$ different completions of $A$ to conclude  that $A$ has constant rank $\rho=5$. Our  intention is  to find an ACI-matrix equivalent to  $A$ which is expressed as equation~(\ref{MatrixWithFixRank7}) given in  Theorem~\ref{ResultadosAnteriores3}. 

Consider any variable of $A$, for instance $x_1$.  Permute  rows and columns of $A$ so that  $x_1$ is placed  in the (1,1)-position. In this case  no permutation of rows or columns is necessary. After that delete $x_1$ from the rest of entries of the first column by multiplying by the left by an adequate nonsingular ACI-matrix $T$. So variable $x_1$ only appears in the (1,1)-entry of the ACI-matrix $A_1=TA$ with $A_1\sim A$. Consider now any variable that is neither in the first row nor in the first column of $A_1$, for instance $x_2$. We can proceed as before so that we will obtain  $A_2\sim A_1$ such that $x_1$ only appears on the (1,1)-position of $A_2$ and  $x_2$ only appears on the  (2,2)-position of $A_2$. Repeat this process with any variable that is neither in the  first two rows nor in the  first two columns of $A_2$. And so on until no variable remains.  At the end of this procedure   we obtain 
\begin{align} \label{diagonalPivots}
A\sim A_5=\left[
\begin{array}{ccccccc}
\text{\circled{$x_1$}} +y_1+1 & 1 & \mathbf{0} & \mathbf{0} & 0 & \mathbf{0} & \mathbf{0} \\ \cline{2-7}
 y_1 & \multicolumn{1}{|c}{\text{\circled{$x_2$}}} & \mathbf{0} & \mathbf{0} & 1 & \mathbf{0} & \mathbf{0} \\ \cline{3-7}
 y_1+1 & \multicolumn{1}{|c}{y_2} & \multicolumn{1}{|c}{\text{\circled{$x_3$}}} & y_4 & 0 & x_6 & 1 \\ \cline{4-7}
 y_1+1 & \multicolumn{1}{|c}{0} & \multicolumn{1}{|c}{1} & \multicolumn{1}{|c}{\text{\circled{$x_4$}}} & 1 & x_6+1 & x_7 \\ \cline{5-7}
 1 & \multicolumn{1}{|c}{0} & \multicolumn{1}{|c}{\mathbf{0}} & \multicolumn{1}{|c}{\mathbf{0}} & \multicolumn{1}{|c}{\text{\circled{$x_5$}}+1} & \mathbf{0} & \mathbf{0} \\ \cline{6-7}
 1 & \multicolumn{1}{|c}{1} & \multicolumn{1}{|c}{\mathbf{0}} & \multicolumn{1}{|c}{\mathbf{0}} & \multicolumn{1}{|c}{1} & \multicolumn{1}{|c}{\mathbf{0}} & \mathbf{0} \\
 y_1 & \multicolumn{1}{|c}{1} & \multicolumn{1}{|c}{\mathbf{0}} & \multicolumn{1}{|c}{\mathbf{0}} & \multicolumn{1}{|c}{0} & \multicolumn{1}{|c}{\mathbf{0}} & \mathbf{0} \\
\end{array}
\right]
\end{align}
where we have circled the chosen variables $x_1, x_2, x_3, x_4, x_5$, which we will call \emph{pivots}.

By Corollary~\ref{SubmatrizCeros3} we know that there exists positive integers $r$ and $s$ with 
$$r+s=7+7-5=9$$ 
such that 
\begin{align*}
A_5\sim \left[\begin{array}{c|c}A_{11} & A_{12} \\ \hline 0_{r\times s} & A_{22} \end{array}\right].
\end{align*} 

It is clear that this equivalence can be realized through a permutation of rows and columns\footnote{In~(\ref{diagonalPivots}) we have marked the zeros in bold face so that, when reordered, give the zero block $0_{r\times s}$ of~(\ref{TheBlockOfZeros})}. This is not a coincidence. Although it is not necessary, we will prove it \emph{formally} 
so that it is  possible to extrapolate the arguments whenever  the number of pivots and the constant rank are equal. 


Let $x_{i_1},\ldots,x_{i_h}$ be the pivots that appear in $A_{11}$.  Proceeding with $A_{11}$ as we did with $A$ we obtain that $ A_{11} \sim T_1A_{11}Q_1=A'_{11}$ with $x_{i_v}$ only appearing in the  $(v,v)$-position of $A'_{11}$. In the same way, let $x_{j_1},\ldots,x_{j_k}$ the pivots that appear in $A_{22}$.  Again, proceeding with $A_{22}$ as we did with $A$ we obtain $A_{22}\sim T_2A_{22}Q_2=A'_{22}$ with $x_{j_w}$ only appearing in the  $(w,w)$-position of $A'_{22}$. So 
$$
\left[\begin{array}{c|c}T_{1} & 0 \\ \hline 0 & T_{2} \end{array}\right]
\left[\begin{array}{c|c}A_{11} & A_{12} \\ \hline 0_{r\times s} & A_{22} \end{array}\right]
\left[\begin{array}{c|c}Q_{1} & 0 \\ \hline 0& Q_{2} \end{array}\right]=
\left[\begin{array}{c|c}A'_{11} & T_1A_{12}Q_2 \\ \hline 0_{r\times s} & A'_{22} \end{array}\right].
$$
Finally, if a row of $T_1A_{12}Q_2$ contains  the pivot  $x_{j_t}$ for some $t=1,\ldots,k$ then we can delete it by adding a multiple of the $t$-th row of $A'_{22}$. Let $A'_{12}$ be the ACI-matrix obtained after we have deleted the pivots $x_{j_1},\ldots,x_{j_k}$ of $T_1A_{12}Q_2$. Then 
$$
\left[\begin{array}{c|c}A'_{11} & T_1A_{12}Q_2 \\ \hline 0_{r\times s} & A'_{22} \end{array}\right]\sim
\left[\begin{array}{c|c}A'_{11} & A'_{12} \\ \hline 0_{r\times s} & A'_{22} \end{array}\right]=A'.
$$
In $A'_{12}$ there will remain $l=5-h-k$ pivots that will appear in $l$ rows of $A'_{12}$ that are  different from its first $h$ rows and in  $l$ columns  of $A'_{12}$ that are  different from its first $k$ columns.  
As $A'_{11}$ has $7-r$ rows  and $A'_{22}$ has $7-s$  columns then 
$$5=(7-r)+(7-s)\geq (h+l)+(k+l)=5+l$$
and so  $l=0$,  $h=7-r$ and $k=7-s$.  The 5 pivots in $A'$ are in the $(1,1),\ldots(7-r,7-r)$ positions of $A'_{11}$ and in the $(1,1),\ldots(7-s,7-s)$ positions of   $A'_{22}$. 

Note that the pivots are in the first 5 rows of $A'$. So each of the first 5 rows of $A_5$  only participates in the row of $A'$ in which the same pivot appears. In other words, there exists a permutation $P$ of order 5 such that 
$$A'=\left[\begin{array}{cc}P & T_{12} \\  0_{2\times 5} & T_{22} \end{array}\right]
 A_5 Q \qquad \text{with} \ \det(T_{22})\neq 0
$$
The position of the pivots of $A'$ does not depend of $T_{12}$ and $T_{22}$. On the other hand, $r+s=9$ implies that $r\geq 2$ and so the $0_{r\times s}$ block of $A'$ does not depend of $T_{12}$ and $T_{22}$.  So without loss of generality we can assume the simplest situation:  $T_{12}=0_{5\times 2}$ and $T_{22}=I_2$. Therefore  $A'$ can be obtained by permuting the columns of one  ACI-matrix which in turn is obtained by permuting the first 5 rows of  $A_5$. This means that the zeros of the zero block we are looking for should already appear in~(\ref{diagonalPivots}). It should be straightforward to find such a block of zeros.

\begin{align} \label{TheBlockOfZeros}
A'=\left[\begin{array}{c|c}A'_{11} & A'_{12} \\ \hline 0_{5\times 4} & A'_{22} \end{array}\right]=\left[
\begin{array}{ccccccc}
 \text{\circled{$x_3$}} & y_4 & x_6 & 1 & y_1+1 & y_2 & 0 \\  
 1 & \text{\circled{$x_4$}} & x_6+1 & x_7 & y_1+1 & 0 & 1 \\ \cline{1-4}
 \mathbf{0} & \mathbf{0} & \mathbf{0} & \multicolumn{1}{c|}{\mathbf{0}} & \text{\circled{$x_1$}} +y_1+1 & 1 & 0 \\
 \mathbf{0} & \mathbf{0} & \mathbf{0} & \multicolumn{1}{c|}{\mathbf{0}} & y_1 & \text{\circled{$x_2$}} & 1 \\
 \mathbf{0} & \mathbf{0} & \mathbf{0} & \multicolumn{1}{c|}{\mathbf{0}} & 1 & 0 & \text{\circled{$x_5$}}+1 \\
 \mathbf{0} & \mathbf{0} & \mathbf{0} & \multicolumn{1}{c|}{\mathbf{0}} & 1 & 1 & 1 \\
 \mathbf{0} & \mathbf{0} & \mathbf{0} & \multicolumn{1}{c|}{\mathbf{0}} & y_1 & 1 & 0 \\
\end{array}
\right]
\end{align}

\emph{This procedure for finding the block of zeros in some ACI-matrix equivalent to $A$, can be applied to other examples whenever we are given a $\rho$ constant rank ACI-matrix for which we can find  $\rho$ pivots}. In the present example $A$ has constant rank $\rho=5$ and we have found 5 pivots. 

Now we continue our search of an equivalent ACI-matrix of $A$ that is of type~(\ref{MatrixWithFixRank7}) of  Theorem~\ref{ResultadosAnteriores3}:
\begin{itemize}
\item[$\bullet$] By checking all completions of $A'_{11}$  we know that $A'_{11}$   has constant rank 2.
Moreover, as $A'_{11}$  has no constant column then $A'_{11}\not\sim {\small \left[\begin{array}{cc|cc}1 & * & * & *  \\  0 & 1 & * & * \end{array}\right]}$. So, by Theorem~\ref{ResultadosAnteriores3}~$(ii)$, $A'_{11}\sim {\small \left[\begin{array}{cc} B & *   \end{array}\right]}$ where $B$ is minimal full rank. Note that if $x_4=y_4=0$ then the second column of  $A'_{11}$ is null, so this column can not be part of a minimal full rank ACI-matrix. Moreover, in Example~\ref{mfrMFR}~$(i)$ we saw that  ${\small \left[\begin{array}{ccc} x_3  & x_6 & 1  \\  1  & x_6+1 & x_7 \end{array}\right]}$ is minimal full rank. Therefore 
$$A'_{11}\sim {\small \left[\begin{array}{cc} B & *   \end{array}\right]} = { \left[\begin{array}{ccc|c} x_3  & x_6 & 1 & y_4 \\  1  & x_6+1 & x_7 & x_4 \end{array}\right]}$$

\item[$\bullet$] On the other hand,  by checking all completions of $A'_{22}$  we known that $A'_{22}$ has constant rank 3. 
 Moreover,   $A'_{22}\not\sim {\tiny \left[
\begin{array}{ccc} * & * &  * \\ * & *  & * \\ \hline 1 & * & * \\  0 & 1 & * \\ 0 & 0 & 1 \end{array}
\right]}$ since no linear combination of the rows of $A'_{22}$ is equal to either   $[\ 1 \ 0 \ 0 \ ]$ or $[\ 0 \ 1 \ 0 \ ]$ or $[\ 0 \ 0 \ 1 \ ]$. So, by Theorem~\ref{ResultadosAnteriores3}~$(iii)$, $A'_{22}\sim {\begin{smat}  * \\ \\   C   \end{smat}}$ where $C$ is maximal full rank. We want to know if $A'_{22}$ is maximal full rank, so we proceed to check  if some augmented ACI-matrix $[ \ A'_{22} \ | \  v \ ]$ with $v\in \mathbb{F}_2^5$ has constant rank. We discover  that in fact  
\begin{equation}\label{ParteMaximal}
\rank \left[\begin{array} {ccc|c}
x_1+y_1+1 & 1 & 0 & \ 1 \ \\  
y_1 & x_2 & 1 & 0 \\ 
1 & 0 & x_5+1 & 0 \\ 
1 & 1 & 1 & 0 \\ 
y_1 & 1 & 0 & 0
\end{array}\right]=\{4\}
\end{equation}
and therefore $A'_{22}$ is not maximal full rank. From Equation~(\ref{ParteMaximal}) it follows that  deleting the first row of $A'_{22}$ we obtain an ACI-matrix, that we will denote $C$, such that 
$$\rank(C)=\rank \left[\begin{array} {cccc}
y_1 & x_2 & 1  \\ 
1 & 0 & x_5+1  \\ 
1 & 1 & 1  \\ 
y_1 & 1 & 0 
\end{array}\right]=\{3\} $$
Again we proceed to check  if some  augmented ACI-matrix $[ \ C \ | \ w \ ]$ with $w\in \mathbb{F}_2^4$ has constant rank 4. As this is not the case then $C$ is maximal full rank. 
\end{itemize}

Therefore we conclude that 
\begin{align*}
A\sim  \small{\left[\begin{array}{c|c|c}
B & * & *  \\ \hline
0 & 0  & * \\ \hline
0  & 0 & C
\end{array}\right]} \sim 
\left[\begin{array}{ccc|c|ccc}
 \text{\circled{$x_3$}}  & x_6 & 1 & y_4 & y_1+1 & y_2 & 0 \\  
 1  & x_6+1 & x_7 & \text{\circled{$x_4$}} & y_1+1 & 0 & 1 \\ \hline
 \mathbf{0} & \mathbf{0} & \mathbf{0} & \mathbf{0}& \text{\circled{$x_1$}} +y_1+1 & 1 & 0 \\ \hline
 \mathbf{0} & \mathbf{0} & \mathbf{0} & \mathbf{0} & y_1 & \text{\circled{$x_2$}} & 1 \\
 \mathbf{0} & \mathbf{0} & \mathbf{0} & \mathbf{0} & 1 & 0 & \text{\circled{$x_5$}}+1 \\
 \mathbf{0} & \mathbf{0} & \mathbf{0} & \mathbf{0} & 1 & 1 & 1 \\
 \mathbf{0} & \mathbf{0} & \mathbf{0} & \mathbf{0} & y_1 & 1 & 0 \\
\end{array}
\right]
\end{align*}
where $B$ is minimal full rank and $C$ is maximal full rank, a possibility that Theorem~\ref{ResultadosAnteriores3} considers.

\begin{remark} \label{remarkSeccion2}
In order to check that $A$ has constant rank, we can avoid checking the rank of all its completions ($2^{10}=1024$ as we pointed out in the beginning of the subsection) and instead check the rank of two much smaller ACI-matrices with less variables.
We proceed  as follows. We take any completion, for instance the one obtained  by assigning 0 to all variables, which will give us a rank of 5. So if $\rank(A)=\{\rho\}$ then $\rho=5$. We assume  that $A$ has constant rank 5 and we proceed in the same way that we did in this subsection. At some point we prove that $A'_{11}$ has constant rank 2 and $A'_{22}$ has constant rank 3. We conclude, from Lemma~\ref{SC+FRC=constant-rank}, that $\rank(A)=\rank(A')=\{5\}$. Note that for  calculating the ranks of $A'_{11}$  and of $A'_{22}$ we calculate the rank of  all their  completions. But the number of completions of $A'_{11}$ is $2^5=32$ and the number of completion of $A'_{22}$ is $2^4=16$. These numbers are much smaller than 1024. Moreover,  the size of  $A'_{11}$ and  $A'_{22}$ are  smaller than the size of $A$.
\end{remark}

\section{The concept of reducibility for  ACI-matrices}

If A is a $m\times n$ constant matrix of rank $\rho$ then it is well known that we can delete $m-\rho$ rows and $n-\rho$ columns in such a way that the $\rho \times \rho$ submatrix of $A$ that we obtain has rank $\rho$. 
We would like to know if ACI-matrices share this property. First we will consider partial matrices. 
We might naively expect that any partial matrix  with constant rank $\rho$ must also have a $\rho\times \rho$ submatrix of constant rank $\rho$. McTigue and Quinlan studied the rank of partial matrices in~\cite{McTigueQuinlan,McTigueQuinlan2,McTigueQuinlan3} and   proved  that this is not the case. 
\begin{theorem}(\cite{McTigueQuinlan3})\label{PartialSubmatrices}
Every partial matrix $A$ of constant rank $\rho$ over a field $\mathbb{F}$ possesses an $\rho\times \rho$  submatrix of constant rank $\rho$ if and only if $|\mathbb{F}|\geq \rho$. 
\end{theorem}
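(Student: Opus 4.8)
My plan is to prove the two implications of the equivalence separately. The implication ``[the $\rho\times\rho$ submatrix exists for every partial matrix of constant rank $\rho$]\ $\Rightarrow |\mathbb{F}|\geq\rho$'' I would prove contrapositively, by exhibiting a counterexample whenever $|\mathbb{F}|<\rho$. The implication ``$|\mathbb{F}|\geq\rho\Rightarrow$ the submatrix exists'' I would prove by induction on the size of the matrix, each time deleting one column or, after transposing, one row.

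For the counterexample, assume $|\mathbb{F}|<\rho$; an infinite field has $|\mathbb{F}|\geq\rho$, so $\mathbb{F}$ is finite, $|\mathbb{F}|=q$ for a prime power $q$, and $\rho\geq q+1$. Take the $(q+1)\times 2q$ partial matrix $A_q$ over $\mathbb{F}_q$ recalled before the statement --- it has constant rank $q+1$ and no $(q+1)\times(q+1)$ submatrix of constant rank $q+1$ --- and set $M=\begin{smat}A_q & 0\\ 0 & I_{\rho-q-1}\end{smat}$, a $\rho\times(q+\rho-1)$ partial matrix. By Lemma~\ref{SC+FRC=constant-rank}, applied with the zero block in the lower left corner, $\rank(M)=\{(q+1)+(\rho-q-1)\}=\{\rho\}$. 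If $N$ were a $\rho\times\rho$ submatrix of $M$ of constant rank $\rho$, it would use all $\rho$ rows of $M$; a row of the $I_{\rho-q-1}$ block whose pivot column were not selected would give a zero row of $N$, so $N$ must keep all $\rho-q-1$ identity columns and exactly $q+1$ columns of $A_q$, i.e.\ $N=\begin{smat}N' & 0\\ 0 & I_{\rho-q-1}\end{smat}$ with $N'$ a $(q+1)\times(q+1)$ submatrix of $A_q$; Lemma~\ref{SC+FRC=constant-rank} would then force $N'$ to have constant rank $q+1$, which $A_q$ forbids. Hence $M$ has constant rank $\rho$ but no $\rho\times\rho$ submatrix of constant rank $\rho$.

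For the converse, fix $\mathbb{F}$ with $|\mathbb{F}|\geq\rho$ and induct on $m+n$; when $m+n=2\rho$ we have $m=n=\rho$ and $A$ itself works. For $m+n>2\rho$, after possibly transposing (which preserves ``partial matrix'' and constant rank $\rho$) assume $n>\rho$; it suffices to find an index $j$ such that $C_j(A)$ (the matrix obtained by deleting column $j$) still has constant rank $\rho$, since the induction hypothesis applied to $C_j(A)$ returns a $\rho\times\rho$ submatrix, which is also a submatrix of $A$. Because every submatrix of $A$ has $\Mrank\leq\Mrank(A)=\rho$, the matrix $C_j(A)$ has constant rank $\rho$ iff $\mrank(C_j(A))=\rho$, i.e.\ iff column $j$ is redundant in every completion of $A$. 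So everything reduces to the claim: \emph{if $|\mathbb{F}|\geq\rho$ and $A$ has constant rank $\rho$ with more than $\rho$ columns, then some column is redundant in all completions.} When $\rho=\min\{m,n\}$, so $\rho=m<n$, this just says that $A$ is not minimal full rank; by the structure of constant rank ACI-matrices (Theorem~\ref{ResultadosAnteriores3}) and the fact that minimal full rank partial matrices only occur over fields smaller than $\rho$, this is guaranteed by $|\mathbb{F}|\geq\rho$.

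The case $\rho<\min\{m,n\}$ of that claim is the crux, and it is here that $|\mathbb{F}|\geq\rho$ is used essentially and optimally --- $A_q$ shows the claim is false once $|\mathbb{F}|=q<\rho$. I would argue by contradiction from the assumption that every column $j$ is essential in some completion $\widehat A^{(j)}$: since the indeterminates of a partial matrix are partitioned among the columns, one can splice the $\widehat A^{(j)}$ into new completions of $A$, and then combine this with the facts that in any completion at most $\rho$ columns can be essential (essential columns being linearly independent) and that the obstruction to keeping a well-chosen set of $\rho$ columns independent along an auxiliary variable is a nonzero polynomial of degree below $\rho$, hence nonvanishing at one of the $\geq\rho$ available field values. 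Assembling these ingredients into a genuine contradiction --- and checking that $|\mathbb{F}|\geq\rho$ is precisely what the pigeonhole step requires, matching the tightness witnessed by $A_q$ --- is the main obstacle; a subsidiary point needing care is that the column we delete really must leave a partial matrix that still has constant rank $\rho$, so that the induction closes.
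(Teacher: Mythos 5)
First, a point of comparison: the paper does not prove this statement at all --- it is quoted verbatim from McTigue and Quinlan \cite{McTigueQuinlan3} --- so your attempt can only be judged on its own terms. Your ``only if'' half is essentially sound: bordering $A_q$ with an identity block to get a $\rho\times(q+\rho-1)$ partial matrix of constant rank $\rho$, and then arguing that any $\rho\times\rho$ constant-rank submatrix would have to retain all identity columns and hence produce a forbidden $(q+1)\times(q+1)$ constant-rank submatrix of $A_q$, is correct (Lemma~\ref{SC+FRC=constant-rank} covers both rank computations), and relying on the cited properties of $A_q$ is reasonable since the paper itself recalls them.

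The genuine gap is in the forward implication $|\mathbb{F}|\geq\rho\Rightarrow$ existence, which is the substantive half of the theorem, and it occurs at two places. In the case $\rho=m<n$ you invoke ``the fact that minimal full rank partial matrices only occur over fields smaller than $\rho$.'' That fact is proved nowhere in this paper, and it is essentially a special case of the very theorem you are proving (it follows from it: a $\rho\times\rho$ constant-rank submatrix of a constant rank $\rho=m<n$ partial matrix makes that matrix column reducible, contradicting minimality), so invoking it here is circular unless you supply an independent argument --- and such an argument is precisely the hard, field-size-dependent content of McTigue--Quinlan's result; Theorem~\ref{ResultadosAnteriores3} alone cannot deliver it, since its alternative ``$B$ minimal full rank'' is exactly what must be excluded. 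Second, for the crux case $\rho<\min\{m,n\}$ you only list ingredients (splicing completions column by column, at most $\rho$ essential columns per completion, a nonzero polynomial of degree below $\rho$) and you acknowledge that ``assembling these ingredients into a genuine contradiction\dots is the main obstacle.'' That assembly is the theorem: as written, nothing in the sketch produces a single column that is redundant in \emph{every} completion simultaneously, and the polynomial whose nonvanishing the pigeonhole step needs is never exhibited. Note also that one cannot shortcut via Corollary~\ref{SubmatrizCeros3} to reduce to the full-rank case, because the left multiplication by $T$ destroys the partial-matrix structure on which your splicing argument depends. So the proposal settles the easy direction but leaves the forward implication unproved.
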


They showed that if $|\mathbb{F}|< \rho$ then there exist examples of partial matrices of size $m\times n$ with $\max\{m,n\}\geq \rho+|\mathbb{F}|-1$ that does not contain a $\rho\times \rho$ submatrix with rank $\rho$. For $\rho=3$ and $\mathbb{F}_2$  they provided the following   $4\times 3$ partial matrix 
\begin{align} \label{thePartialMFR}
{\small P= \begin{bmatrix}[cccccc]
 1 & 1 & 1 \\ 
 0 & 1 & x_3 \\
x_1 & 0 & 1 \\
 0   & x_2 & 1 \\
 \end{bmatrix}}
\end{align}
that has constant rank 3 and has no $3\times 3$ submatrix of constant rank 3. 

Now we will state some definitions motivated by the previous remarks. Since partial matrices are a subclass of ACI-matrices, these definitions will be stated in the more general framework.

\begin{definition} \label{irreducible}
Let $A$ be a $m\times n$ ACI-matrix of constant rank $\rho$ over a field $\mathbb{F}$. We  say that:
\begin{enumerate}
\item  $A$  is {\bf row reducible} if it contains some row $R$ such that the $(m-1)\times n$ ACI-matrix obtained by deleting $R$ from $A$  has constant rank $\rho$. And $A$ is {\bf row irreducible} otherwise.

\item    $A$   is {\bf column reducible} if it contains some column $C$ such that the $m\times (n-1)$ ACI-matrix obtained by deleting $C$ from $A$  has constant rank $\rho$. And  $A$ is {\bf column irreducible} otherwise. 

\item $A$  is {\bf  reducible} if it is is row reducible and/or column reducible.  And  $A$ is {\bf  irreducible} otherwise. 
\end{enumerate}
\end{definition}

With this terminology the partial matrix $P$ given in~(\ref{thePartialMFR}) is irreducible. If we consider $P$ as an ACI-matrix we might expect to find, in the equivalence class of $P$, some reducible ACI-matrix. That is, some ACI-matrix with a $3\times 3$ ACI-submatrix of constant rank 3. But this is not the case. 

Nevertheless, there are  irreducible partial matrices such that its equivalence class contains reducible ACI-matrices. For instance, over $\mathbb{F}_2$ consider 
\begin{align} \label{partialIrred->ACIred}
\scriptsize
E=\begin{bmatrix}
 1 & 0 & 0 & 0 & 1 \\
 0 & 1 & 1 & 1 & 0 \\
 x_1 & 1 & 0 & 0 & 0 \\
 1 & x_2 & 0 & 0 & 0 \\
 1 & 1 & x_3 & 1 & 0 \\
 1 & 0 & 0 & x_4 & 0 \\
 1 & 0 & 0 & 1 & x_5 \\
\end{bmatrix} 
\xrightarrow[\sim]{ \stackrel{\mbox{row}_4 \rightarrow \mbox{row}_4 + \mbox{row}_1}{\mbox{row}_7 \rightarrow \mbox{row}_7 + \mbox{row}_1} }
F=\begin{bmatrix}
 1 & 0 & 0 & 0 & 1 \\ \hline
 0 & 1 & 1 & 1 & 0 \\
 x_1 & 1 & 0 & 0 & 0 \\
 0 & x_2 & 0 & 0 & 1 \\
 1 & 1 & x_3 & 1 & 0 \\
 1 & 0 & 0 & x_4 & 0 \\
 0 & 0 & 0 & 1 & x_5+1 \\
\end{bmatrix}.
\end{align}
It can be checked that the partial matrix $E$ is irreducible and has constant rank 5, that   $F$ is equivalent to $E$ and so has constant rank 5, and that  $F$ is row reducible: if we delete its first row we obtain an ACI-matrix of constant rank 5.\footnote{If we do not impose to the  irreducible matrix  to be  partial  then there are much simpler examples than~(\ref{partialIrred->ACIred}). Consider for instance the two equivalent ACI-matrices $E'=\begin{smat} x \\ 1+x \end{smat}\sim F'=\begin{smat} x \\ 1 \end{smat}$ over any field: $E'$ is irreducible of constant rank one, $F'$ is equivalent to $E'$, and $F'$ is row reducible.} 

As we explained in Section~\ref{equivACIs},  equivalent ACI-matrices represent the same geometrical collection of objects. So it would make sense to have a stronger concept of irreducibility, one that is preserved by equivalence. This motivates the following definition.

\begin{definition} \label{completelyIrreducible}
Let $A$ be a $m\times n$ an ACI-matrix of constant rank $\rho$ over a field $\mathbb{F}$. We  say that
 $A$  is {\bf completely irreducible} if each  ACI-matrix equivalent to $A$ is irreducible.
 \end{definition}
 
We have studied the effect on the rank of an ACI-matrix of constant rank when we delete one of its columns (or one of its rows). We are also interested in the effect on the rank  of an ACI-matrix of constant rank when we add one constant column. 

\begin{definition} \label{columnaugmentable}
Let $A$ be a $m\times n$  ACI-matrix of constant rank $\rho$ over a field $\mathbb{F}$. We say that $A$ is {\bf column augmentable} if there exists some   $v\in \mathbb{F}^m$ such that the augmented ACI-matrix $\big[\, A \ v\big]$ is of constant rank $\rho+1$. Otherwise we will say that $A$ is {\bf column non-augmentable}
 \end{definition}

\begin{remark}\label{mfrMfrCI}
Let $A$ be a $m\times n$ ACI-matrix of constant rank $\rho$. From Definitions~\ref{mfr},~\ref{irreducible} and~\ref{columnaugmentable}  it follows that:
\begin{enumerate}[(1)]
\item $A$  is minimal full rank if and only if $\rho=m<n$ and $A$ is column irreducible. 
\item $A$  is maximal full rank if and only if $\rho=n<m$ and $A$ is column non-augmentable.
\end{enumerate}
\end{remark}

In the next result we will see how we can study the complete irreducibility of an ACI-matrix without considering all its equivalent ACI-matrices. 
 
 \begin{theorem} \label{alternativeDefOfCI}
The  $m\times n$ ACI-matrix $A$ of  constant rank $\rho$ is completely irreducible if and only if:
 \begin{enumerate}[(a)]
\item  $A$ is column irreducible. 
\item  $A$ is column non-augmentable.
\end{enumerate}
\end{theorem}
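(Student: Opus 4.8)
The plan is to reduce the statement to one technical equivalence together with two routine invariance facts. The first invariance fact is that column reducibility is preserved under equivalence: if $B=TCQ$ with $T$ nonsingular and $Q$ a permutation, then deleting a column of $B$ coincides, up to the action of $T$ and a reordering of the surviving columns, with deleting a column of $C$, so (exactly as in the column-deletion bookkeeping in the proof of Lemma~\ref{MaxAndMin}) the property ``deleting that column leaves constant rank $\rho$'' transfers between $B$ and $C$; in particular $A$ is column irreducible if and only if every ACI-matrix equivalent to $A$ is column irreducible. The second invariance fact is that column augmentability is preserved under equivalence: appending a constant column $v$ to $B=TCQ$ yields the same rank set as appending $T^{-1}v$ to $C$, so $[\,B\ v\,]$ has constant rank $\rho+1$ for some $v$ precisely when $[\,C\ w\,]$ has constant rank $\rho+1$ for some $w$. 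After these observations the whole theorem reduces to controlling \emph{row} reducibility inside the equivalence class of $A$.

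The core step I would isolate as an auxiliary claim: for an ACI-matrix $B$ of constant rank $\rho$, some ACI-matrix equivalent to $B$ is row reducible if and only if $B$ is column augmentable. The engine in both directions is a ``clearing'' identity: if $C=\begin{smat}R\\ C'\end{smat}$ is an ACI-matrix with first row $R$ and lower block $C'$, then every completion satisfies $\rank[\,\widehat C\ e_1\,]=1+\rank\widehat{C'}$, because the constant last column $e_1$ can be used to zero out the first row of $\widehat C$ and bring it to block form; consequently $[\,C\ e_1\,]$ has constant rank $\rho+1$ if and only if $C'$ has constant rank $\rho$. For one direction of the claim, suppose $[\,B\ v\,]$ has constant rank $\rho+1$; then $v\neq 0$, so pick a nonsingular $T$ with $Tv=e_1$ and set $C:=TB\sim B$, so that $[\,C\ e_1\,]=T[\,B\ v\,]$ has constant rank $\rho+1$, whence $C'$ (the matrix $C$ with its first row deleted) has constant rank $\rho$ and $C\sim B$ is row reducible. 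For the other direction, if some $C\sim B$ is row reducible, move the deletable row to the top by a left permutation matrix (which keeps us in the equivalence class), write $C=\begin{smat}R\\ C'\end{smat}$ with $C'$ of constant rank $\rho$, and deduce from the clearing identity that $[\,C\ e_1\,]$ has constant rank $\rho+1$; hence $C$, and therefore $B$ by the second invariance fact, is column augmentable.

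Granting the auxiliary claim, the theorem follows immediately. For the forward implication, if $A$ is completely irreducible then $A$ itself is irreducible, hence column irreducible, which is $(a)$; and if $A$ were column augmentable the claim would produce an ACI-matrix equivalent to $A$ that is row reducible, hence reducible, contradicting complete irreducibility, so $(b)$ holds. For the converse, assume $(a)$ and $(b)$: by the first invariance fact together with $(a)$, every ACI-matrix equivalent to $A$ is column irreducible; and if some ACI-matrix equivalent to $A$ were row reducible, the claim would force $A$ to be column augmentable, contradicting $(b)$; therefore every ACI-matrix equivalent to $A$ is both row and column irreducible, i.e.\ $A$ is completely irreducible.

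I expect the only genuinely delicate points to be bookkeeping: stating the clearing identity at the level of individual completions, so that honest constant rank (rather than merely $\Mrank$ or $\mrank$) is transferred, and justifying the ``without loss of generality the deletable row is the first row'' reduction by observing that row permutations are left multiplications by nonsingular constant matrices and so respect the equivalence class. Everything else is the same kind of equivalence-invariance argument already used for Lemma~\ref{MaxAndMin}.
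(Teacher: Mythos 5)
Your proposal is correct and follows essentially the same route as the paper's proof: both reduce the statement to the equivalence ``some matrix in the equivalence class of $A$ is row reducible $\Leftrightarrow$ $A$ is column augmentable,'' proved by choosing a nonsingular $T$ with $Tv=e_1$ and using the block identity $\rank[\widehat{C}\ e_1]=1+\rank\widehat{C'}$ to trade the appended column $e_1$ for deletion of the first row. Your explicit statements of the invariance of column irreducibility and column augmentability under equivalence are just a slightly more systematic packaging of steps the paper carries out inline.
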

\begin{proof}
That $A$ is completely irreducible means that $TAQ$ is irreducible for any nonsingular constant  $T$ of order $m$ and any permutation  $Q$ of order $n$ or, equivalently, that  $TA$ is irreducible for any nonsingular constant  $T$ of order $m$. In turn, this is equal to say that $TA$ is column irreducible and row irreducible for any nonsingular constant  $T$ of order $m$.  And observe that  $TA$ is column irreducible if and only if $A$ is column irreducible because
$$\rank(C_j(TA))=\rank(TC_j(A))=\rank(C_j(A))$$ 
where   $C_j(A)$ and $C_j(TA)$ denote the ACI-matrices obtained by deleting the column $j$ of  $A$ and $TA$.    

In summary, $A$ is completely irreducible if and only if  $A$ is column irreducible and $TA$ is row irreducible for any nonsingular constant $T$ of order $m$. Observe that we finish the proof of our theorem if we  prove that: 
\begin{center}
$TA$ is row irreducible for any nonsingular constant $T$ $\Longleftrightarrow$ $A$ is column non-augmentable. 
\end{center}
Actually we will prove the opposite affirmation:
\begin{center}
 $TA$ is row reducible for some nonsingular constant $T$ $\Longleftrightarrow$ $A$ is column augmentable. 
\end{center}

\begin{description}
\item[$\Rightarrow$)] Let $T$ be a nonsingular constant matrix of order $m$ such that $TA$ be row reducible. Then there is an $i\in \{1,\ldots, m\}$ such that if we delete the $i$-th row from $TA$ the resulting ACI-matrix remains of constant rank $\rho$.   Without loss of generality assume that $i=1$.  Let $R_1(TA)$ be the ACI-matrix that we obtain  deleting the first row of $TA$. Then 
$$
\rank [TA | e_1]=\rank \left[\begin{array}{c|c}  TA & {\scriptsize \begin{array}{c} 1 \\ 0 \\ \vdots \\ 0 \end{array} } \end{array}\right]= 
\rank{\scriptsize \left[\begin{array}{c|c}  * & 1 \\ \hline \multicolumn{1}{c|}{\multirow{3}{*}{\large$R_1(TA)$}} & 0 \\  &\vdots \\ & 0  \end{array}\right] }=\{\rho+1\}
$$
so 
\[
\rank[A | T^{-1}e_1 ]=\rank(T [A | T^{-1}e_1])=\rank[TA | e_1]=\{\rho+1\}.
\]
Therefore $A$ is column augmentable with vector $T^{-1}e_1$.

\item[$\Leftarrow$)] Let $v$ be a non-zero constant vector for which $\big[A | \ v\big]$ has constant rank $\rho+1$. Let $T$ be a nonsingular constant matrix of order $m$ such that $Tv=e_1$. Then
\[
\rank{\scriptsize \left[\begin{array}{c|c}  * & 1 \\ \hline \multicolumn{1}{c|}{\multirow{3}{*}{\large$R_1(TA)$}} & 0 \\  &\vdots \\ & 0  \end{array}\right] } =\rank[TA | e_1 ]=\rank(T^{-1}[TA | e_1])=\rank[A | v]=\{\rho+1\}.
\]
So $R_1(TA)$ has constant rank $\rho$, and so $TA$ is row reducible because of its first row.
\end{description}
\end{proof}

\section{Completely irreducible ACI-matrices
} \label{completelyirreduciblefullrank}

The previous section should have convinced us that completely irreducible ACI-matrices deserve to be analyzed and fully understood. We will first analyze completely irreducible ACI-matrices which are full rank, after that those which are not full rank. Then 
we will make some remarks on how to construct completely irreducible ACI-matrices.
And finally we will establish where do the completely irreducible ACI-matrices appear in Theorem~\ref{ResultadosAnteriores3}. 

\subsection{Completely irreducible  ACI-matrices  which are full rank}\label{CIfullrank}

In the next result we will show that  the concept of complete irreducibility  for full rank ACI-matrices encompasses the concepts of square, minimal and maximal full rank.

\begin{proposition} \label{SFC}
Let $A$ be an $m\times n$ ACI-matrix of constant rank $\rho$. Then 
\begin{enumerate}[(i)]
\item  $A$ is completely irreducible of constant rank $\rho=m=n$ if and only if $A$ is square full rank.
\item $A$ is  completely irreducible of constant rank $\rho=m<n$   if and only if $A$ is  minimal full rank.
\item $A$ is  completely irreducible of constant rank $\rho=n<m$ if and only if $A$ is maximal full rank. 
\end{enumerate}
\end{proposition}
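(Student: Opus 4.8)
The plan is to prove each of the three equivalences by unwinding the definitions and invoking Remark~\ref{mfrMfrCI} together with Theorem~\ref{alternativeDefOfCI}. Recall that Theorem~\ref{alternativeDefOfCI} says a constant-rank ACI-matrix is completely irreducible if and only if it is simultaneously column irreducible and column non-augmentable, and Remark~\ref{mfrMfrCI} translates minimality and maximality into exactly these two properties in the relevant regimes. So the heart of the argument is just sorting out which of the two conditions (a) and (b) of Theorem~\ref{alternativeDefOfCI} is automatic for each shape of $A$.

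For item $(i)$, assume first that $A$ is completely irreducible with $\rho=m=n$. Then by definition $\rho=\min\{m,n\}$, so $A$ is full rank, and since $\rho=m=n$ it is square full rank by Definition~\ref{mfr}. Conversely, if $A$ is square full rank then $m=n=\rho$; I would check directly that $A$ is column irreducible (deleting any column leaves an $m\times(m-1)$ ACI-matrix, which cannot have constant rank $m$ since $m>m-1$) and column non-augmentable (augmenting gives an $m\times(m+1)$ ACI-matrix, which cannot have constant rank $m+1>m$ because it has only $m$ rows). Hence by Theorem~\ref{alternativeDefOfCI}, $A$ is completely irreducible.

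For item $(ii)$, suppose $A$ is completely irreducible of constant rank $\rho=m<n$. By Theorem~\ref{alternativeDefOfCI}, $A$ is column irreducible, and then part (1) of Remark~\ref{mfrMfrCI} gives exactly that $A$ is minimal full rank. Conversely, if $A$ is minimal full rank then $\rho=m<n$ and, again by Remark~\ref{mfrMfrCI}(1), $A$ is column irreducible; it remains to show $A$ is column non-augmentable, which is immediate since any augmented matrix $[\,A\ v\,]$ has only $m$ rows and therefore cannot have constant rank $\rho+1=m+1>m$. Theorem~\ref{alternativeDefOfCI} then yields complete irreducibility. Item $(iii)$ is the mirror image: if $A$ is completely irreducible of constant rank $\rho=n<m$, then $A$ is column non-augmentable by Theorem~\ref{alternativeDefOfCI}, hence maximal full rank by Remark~\ref{mfrMfrCI}(2); conversely, a maximal full rank $A$ has $\rho=n<m$ and is column non-augmentable by Remark~\ref{mfrMfrCI}(2), and it is automatically column irreducible because deleting a column gives an $m\times(n-1)$ ACI-matrix which cannot have constant rank $n>n-1$. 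So Theorem~\ref{alternativeDefOfCI} applies.

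I do not anticipate a genuine obstacle here: the proposition is essentially a bookkeeping corollary of Theorem~\ref{alternativeDefOfCI} and Remark~\ref{mfrMfrCI}. The only point requiring a little care is making sure, in each direction, that the "trivially impossible" condition (too few rows or too few columns to attain the target rank) is stated correctly — that $\rho=m$ forces column non-augmentability and $\rho=n$ forces column irreducibility — and that one never needs $|\mathbb{F}|$ hypotheses, since all three equivalences hold over an arbitrary field. I would write the proof compactly, treating $(i)$ in full and then remarking that $(ii)$ and $(iii)$ follow the same pattern, citing Remark~\ref{mfrMfrCI} for the substantive half of each.
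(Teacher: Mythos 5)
Your proof is correct and follows essentially the same route as the paper: the forward directions and the substantive halves of $(ii)$ and $(iii)$ are exactly the paper's argument via Theorem~\ref{alternativeDefOfCI} and Remark~\ref{mfrMfrCI}, with the ``too few rows/columns'' observations supplying the automatic condition in each converse. The only cosmetic difference is in $(i)$, where the paper argues directly that square full rank is preserved by equivalence and implies irreducibility, while you route the sufficiency through Theorem~\ref{alternativeDefOfCI}; both are valid and equally short.
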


\begin{proof}  
\begin{enumerate}[$(i)$]
\item The necessary part  is trivial. The sufficient part    is based in two clear facts: that a square full rank ACI-matrix is irreducible, and that  the ACI-matrices which are equivalent to a square full rank ACI-matrix are square full rank. 
\item $\Rightarrow)$ Assume that $A$ is  completely irreducible of constant rank $\rho=m<n$.   By item (a) of Theorem~\ref{alternativeDefOfCI}, $A$ is column irreducible. So, by item (1) of Remark~\ref{mfrMfrCI}, $A$ is minimal full rank.

$\Leftarrow)$ Assume that $A$ is  minimal full rank. So $A$ has constant rank $m<n$ and therefore $A$ is column non-augmentable, since for each $v\in \mathbb{F}^m$ the augmented matrix $\big[A | \ v\big]$ has size $m\times (n+1)$ and  constant rank $m$. On the other hand, by item (1) of Remark~\ref{mfrMfrCI}, $A$ is column irreducible. So, by  Theorem~\ref{alternativeDefOfCI} $A$ is completely irreducible of constant rank $m<n$.

\item $\Rightarrow)$ Assume that $A$ is  completely irreducible of constant rank $\rho=n<m$.   By item (b) of Theorem~\ref{alternativeDefOfCI}, $A$ is column non-augmentable. So, by item (2) of Remark~\ref{mfrMfrCI}, $A$ is maximal full rank.

$\Leftarrow)$ Assume that $A$ is  maximal full rank. So $A$ has constant rank $n<m$ and therefore $A$ is column irreducible, since if we delete one column of $A$ we obtain an ACI-matrix of size $m\times (n-1)$ and  constant rank $n-1$. On the other hand, by item (2) of Remark~\ref{mfrMfrCI}, $A$ is column non-augmentable. So, by  Theorem~\ref{alternativeDefOfCI} $A$ is completely irreducible of constant rank $n<m$.

\end{enumerate}
\end{proof}

\subsection{Completely irreducible  ACI-matrices  which are not full rank}

In our next result we   characterize  the completely irreducible ACI-matrices which are not full rank.

\begin{theorem} \label{FixedRankIntoCD2}
Let $A$ be a $m \times n$     ACI-matrix over a field $\mathbb{F}$ and let $\rho$ an integer with $1\leq \rho < \min \{m, n\}$. Then $A$ is completely irreducible  of constant rank $\rho$ if and only if for some positive integers $\rho_1$ and $\rho_2$ such that  $\rho_1+\rho_2=\rho$ we have that  $ A\sim   \begin{smat} A_{11} & *  \\   0 & A_{22} \end{smat} $ where  $A_{11}$ is minimal full rank of constant rank $\rho_1$ and   $A_{22}$ is maximal full rank of constant rank $\rho_2$.
\end{theorem}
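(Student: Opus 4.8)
The plan is to leverage the three big structural results already established: Corollary~\ref{SubmatrizCeros3} (which gives a block-triangular normal form for any constant-rank ACI-matrix with $\rho<\min\{m,n\}$), Theorem~\ref{alternativeDefOfCI} (complete irreducibility $=$ column irreducibility $+$ column non-augmentability), and Proposition~\ref{SFC} together with Remark~\ref{mfrMfrCI} (which identify minimal/maximal full rank with the corresponding one-sided irreducibility/non-augmentability). The direction one should tackle first is $(\Leftarrow)$, since there the block form is handed to us.

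For $(\Leftarrow)$: suppose $A\sim \begin{smat} A_{11} & * \\ 0 & A_{22}\end{smat}$ with $A_{11}$ minimal full rank of rank $\rho_1$ (so $A_{11}$ is $\rho_1\times s$ with $\rho_1<s$) and $A_{22}$ maximal full rank of rank $\rho_2$ (so $A_{22}$ is $r\times\rho_2$ with $\rho_2<r$), and $\rho_1+\rho_2=\rho$. First, constant rank: by Lemma~\ref{SC+FRC=constant-rank} the whole matrix has constant rank $\rho_1+\rho_2=\rho$, and since $A_{11}$ has $s>\rho_1$ columns contributing and $A_{22}$ has $r>\rho_2$ rows contributing, $\rho<\min\{m,n\}$. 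Now I must verify (a) column irreducibility and (b) column non-augmentability, using Theorem~\ref{alternativeDefOfCI}; since both properties are equivalence-invariant (for (a) this is shown inside the proof of Theorem~\ref{alternativeDefOfCI}; for (b) it follows from Lemma~\ref{MaxAndMin}-style reasoning on augmented matrices as in the proof of Lemma~\ref{MaxAndMin}(ii)), I may work with the block form directly. For (a): deleting a column of the block form deletes a column either from the $A_{11}$-block-column or from the $A_{22}$-block-column. In the first case the surviving left block-column has $\rho_1$ rows and now $s-1\geq\rho_1$ columns; if it still had constant rank $\rho$, Lemma~\ref{SC+FRC=constant-rank} would force the top-left $\rho_1\times(s-1)$ piece to have constant rank $\rho_1$, contradicting that $A_{11}$ is column irreducible (Remark~\ref{mfrMfrCI}(1)). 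In the second case, deleting a column from $A_{22}$: the resulting matrix, by Lemma~\ref{SC+FRC=constant-rank}, has constant rank $\rho$ only if $A_{22}$ minus a column still has constant rank $\rho_2$; but $A_{22}$ minimal... no — $A_{22}$ is maximal full rank, so deleting one of its columns gives an $r\times(\rho_2-1)$ ACI-matrix of constant rank $\rho_2-1\neq\rho_2$, so the big matrix drops rank. Hence column irreducible. For (b): adding a constant column $v\in\mathbb{F}^m$; writing $v=\begin{smat} v_1\\v_2\end{smat}$ according to the $(\rho_1+r)$-splitting of rows... wait, the row splitting is $(m-r)+r=m$ with $m-r=\rho_1$, so $v_1\in\mathbb{F}^{\rho_1}$, $v_2\in\mathbb{F}^r$. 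Appending $v$ as a new \emph{last} column keeps the $0_{r\times s}$ block in place but now the bottom-right block is $\begin{smat} A_{22} & v_2\end{smat}$, of size $r\times(\rho_2+1)$; by Lemma~\ref{SC+FRC=constant-rank} the augmented matrix has constant rank $\rho+1$ iff $A_{11}$ has constant rank $\rho_1$ (true) and $\begin{smat}A_{22}&v_2\end{smat}$ has constant rank $\rho_2+1$ — but $A_{22}$ maximal full rank means $\begin{smat}A_{22}&w\end{smat}$ is never of constant rank $\rho_2+1$ for any $w\in\mathbb{F}^r$. One subtlety: appending $v$ in a general column position, not the last; but by a column permutation we may move the new column past the $A_{11}$-block-columns, and we must also worry that $v$ could interact with the $*$ block — however a row operation can clear the top $\rho_1$ entries of the appended column using rows of $A_{22}$ only if those happen to span, which is exactly the point: the clean way is to note appending $v$ and then row-reducing is an equivalence, reducing to the last-column case. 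So $A$ is column non-augmentable, hence completely irreducible of constant rank $\rho$.

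For $(\Rightarrow)$: suppose $A$ is completely irreducible of constant rank $\rho<\min\{m,n\}$. By Corollary~\ref{SubmatrizCeros3} there are positive integers $r,s$ with $\rho=(m-r)+(n-s)$ and $A\sim\begin{smat}A_{11}&A_{12}\\0_{r\times s}&A_{22}\end{smat}$, with (since $\rho<\min\{m,n\}$ forces $r<m$ and $s<n$) $\rank(A_{11})=\{m-r\}$ and $\rank(A_{22})=\{n-s\}$; here $A_{11}$ is $(m-r)\times s$ with $m-r<s$ and $A_{22}$ is $r\times(n-s)$ with $n-s<r$. Set $\rho_1=m-r$, $\rho_2=n-s$. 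I claim $A_{11}$ is minimal full rank and $A_{22}$ is maximal full rank; given that, we'd be done after possibly normalizing the off-diagonal block (which can be done by row/column operations as in the worked example of Section 2.3, though for the statement as written we don't even need to touch $A_{12}$). Since complete irreducibility is equivalence-invariant, assume $A$ literally equals this block form. For $A_{11}$: by Remark~\ref{mfrMfrCI}(1) it suffices to show $A_{11}$ is column irreducible. If some column of $A_{11}$ could be deleted keeping constant rank $\rho_1$, then deleting the corresponding column of $A$ (one of the first $s$ columns) gives $\begin{smat}A_{11}'&A_{12}\\0&A_{22}\end{smat}$ with $A_{11}'$ still of constant rank $\rho_1$, hence by Lemma~\ref{SC+FRC=constant-rank} still of constant rank $\rho$ — contradicting column irreducibility of $A$. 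For $A_{22}$: by Remark~\ref{mfrMfrCI}(2) it suffices to show $A_{22}$ is column non-augmentable. Suppose $\begin{smat}A_{22}&w\end{smat}$ has constant rank $\rho_2+1$ for some $w\in\mathbb{F}^r$; then appending $\begin{smat}0_{(m-r)\times 1}\\w\end{smat}$ to $A$ yields $\begin{smat}A_{11}&A_{12}&0\\0&A_{22}&w\end{smat}$, which by Lemma~\ref{SC+FRC=constant-rank} has constant rank $(m-r)+(n-s+1)=\rho+1$ — so $A$ is column augmentable, contradicting complete irreducibility via Theorem~\ref{alternativeDefOfCI}(b). Thus $A_{11}$ is minimal full rank of constant rank $\rho_1$ and $A_{22}$ is maximal full rank of constant rank $\rho_2$, with $\rho_1+\rho_2=\rho$ and both positive (since $A_{11}$ minimal full rank needs $m-r\geq 1$ and $A_{22}$ maximal full rank needs $n-s\geq 1$ — indeed $\rho_1=0$ would make $A_{11}$ void, forcing $r=m$, excluded; similarly $\rho_2>0$).

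The main obstacle I anticipate is the bookkeeping around \emph{which column one may append and in which position}, and the symmetric issue of clearing the off-diagonal $A_{12}$ block: one must be careful that appending a constant column and then performing row operations is legitimately an equivalence of ACI-matrices (it is, since adding a zero column to $A$ and then hitting with a nonsingular constant $T$ is covered by the definition), and that Lemma~\ref{SC+FRC=constant-rank} applies with the correct dimension inequalities after the append/delete (the hypotheses $r>m$, $s>n$ there are stated for the \emph{transposed} shape, so one must match $A_{11}$ being $(m-r)\times s$ wide and $A_{22}$ being $r\times(n-s)$ tall, which is exactly the regime guaranteed by $\rho<\min\{m,n\}$). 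Once that dimension-matching is pinned down, every step reduces to a direct application of Lemma~\ref{SC+FRC=constant-rank}, Theorem~\ref{alternativeDefOfCI}, and Remark~\ref{mfrMfrCI}, with no genuinely hard estimate involved.
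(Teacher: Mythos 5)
Your overall strategy is sound, and two of its ingredients are genuinely nicer than the paper's own argument: in the $(\Leftarrow)$ direction you establish column non-augmentability by applying Lemma~\ref{SC+FRC=constant-rank} to the augmented matrix $\bigl[\,A \mid v\,\bigr]$ (whose bottom-right block becomes $\bigl[\,A_{22} \mid v_2\,\bigr]$, never of constant rank $\rho_2+1$ because $A_{22}$ is maximal full rank), whereas the paper constructs an explicit rank-$\rho$ completion via a kernel vector of a completion of $\bigl[\,A_{22}\mid u_2\,\bigr]$ and a solution of $\widehat{A_{11}}\,v=-a$; and in the $(\Rightarrow)$ direction you work directly from Corollary~\ref{SubmatrizCeros3}, showing the blocks $A_{11}$, $A_{22}$ themselves are column irreducible resp.\ column non-augmentable and invoking Remark~\ref{mfrMfrCI}, instead of routing through item $(iv)$ of the detailed version of Theorem~\ref{ResultadosAnteriores3} and then eliminating the upper-triangular alternatives for $B$ and $C$, as the paper does. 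Both of these substitutions are correct (the direction of Lemma~\ref{SC+FRC=constant-rank} you need is pure rank subadditivity and applies to the augmented/deleted block shapes you use).

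There is, however, one incorrect justification in your $(\Rightarrow)$ direction: you claim that ``$\rho<\min\{m,n\}$ forces $r<m$ and $s<n$'' in the decomposition of Corollary~\ref{SubmatrizCeros3}. That is false: the corollary explicitly allows $r=m$ (no upper blocks, $A\sim\bigl[\,0_{m\times s}\mid A_{22}\,\bigr]$) or $s=n$ (no right blocks), and these are perfectly compatible with $\rho=(m-r)+(n-s)<\min\{m,n\}$ --- e.g.\ $m=n=3$, $\rho=2$, $r=3$, $s=1$. Since your whole argument needs both diagonal blocks to be present and nonvoid ($\rho_1,\rho_2\geq 1$), this must be ruled out, and your later remark that ``$\rho_1=0$ would force $r=m$, excluded'' is circular because the exclusion was never established. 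The correct reason is complete irreducibility itself: if $r=m$ then $A$ is equivalent to $\bigl[\,0_{m\times s}\mid A_{22}\,\bigr]$ with $s\geq 1$, and deleting a zero column leaves constant rank $n-s=\rho$, so this equivalent matrix is column reducible; if $s=n$ then the equivalent matrix has $r\geq 1$ zero rows and deleting one leaves constant rank $m-r=\rho$, so it is row reducible. Either way $A$ would not be completely irreducible. (This is exactly the role played in the paper by the assertion that only case $(a)$ of Theorem~\ref{ResultadosAnteriores3}$(iv)$ can occur.) With that one-line repair your proof is complete.
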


\begin{proof} Let $A$ be a $m \times n$     ACI-matrix  and let $\rho$ an integer with $1\leq \rho < \min \{m, n\}$. 

\begin{itemize}
\item[$\Rightarrow$)] As $A$ has constant rank $\rho$ with $1\leq \rho < \min \{m, n\}$ then we can apply to $A$ the item $(iv)$ of  Theorem~\ref{ResultadosAnteriores3}. Moreover, by hypothesis  $A$  is completely irreducible and so  we must apply exactly the case $(a)$ of item $(iv)$  of  Theorem~\ref{ResultadosAnteriores3}. So, for some positive integers $r<m$ and $s<n$ with $\rho=(m-r)+(n-s)$ there exist a nonsingular constant  matrix $T$ of order $m$ and a permutation matrix $Q$ of order $n$ such that 
  \begin{equation*}
{\small TAQ = \left[\begin{array}{cc|c}
B & * & *  \\ \hline
\multicolumn{2}{c|}{\multirow{2}{*}{$0_{r\times s}$}} & * \\ 
 & & C
\end{array}\right] }
\end{equation*}
with either $ B= { \begin{smat} 
 1 &  & *  \\ 
  & \ddots &   \\ 
 0 & & 1  
\end{smat}}$ or  $B$ minimal full rank, and with either $C = { \begin{smat} 
 1 &  & *  \\ 
  & \ddots &   \\ 
 0 & & 1  
\end{smat}}$ or $C$  maximal full rank. Let $\rho_1=m-r$ and $\rho_2=n-s$. Note that $\rho_1>0$, 
that $\rho_2>0$, 
and that $\rho_1+\rho_2=\rho$. As $B$ has $\rho_1$ rows and $C$ has $\rho_2$ columns then  it follows from Lemma~\ref{SC+FRC=constant-rank} that 
$$\rank(A)=\rank(TAQ)=\{\rho_1+\rho_2\}=\{\rho\}.$$
As $A$ is completely irreducible then $TAQ$ is  irreducible and so
\begin{equation*}
{\small  \left[\begin{array}{cc|c}
B & * & *  \\ \hline
\multicolumn{2}{c|}{\multirow{2}{*}{$0_{r\times s}$}} & * \\ 
 & & C
\end{array}\right]= \left[\begin{array}{c|c}
B &  *  \\ \hline
 0_{r\times s} & C
\end{array}\right], }
\end{equation*}
otherwise we could delete one row or one column without changing the rank of $TAQ$. Note that if $ B= { \begin{smat} 
 1 &  & *  \\ 
  & \ddots &   \\ 
 0 & & 1  
\end{smat}}$ then $\rank(A)=\{n\}$ and that if $ C= { \begin{smat} 
 1 &  & *  \\ 
  & \ddots &   \\ 
 0 & & 1  
\end{smat}}$ then $\rank(A)=\{m\}$. None of both possibilities are valid  since $A$ has constant rank $\rho <   \min \{m, n\}$. Then $B$ is minimal full rank of constant rank $\rho_1$ and $C$ is maximal full rank of constant rank $\rho_2$.

\item[$\Leftarrow$)] 
As complete irreducibility is preserved by equivalence then, without loss of generality, we can assume that  $A= \begin{smat} A_{11} &  A_{12}  \\  0 & A_{22} \end{smat}$ where $A_{11}$ is  $\rho_1\times (n-\rho_2)$ minimal full rank of constant rank $\rho_1$ and $A_{22}$ is  $(m-\rho_1)\times \rho_2$ maximal full rank  of constant rank $\rho_2$. From Theorem~\ref{alternativeDefOfCI}, we need to prove that:

\begin{itemize}
\item \textbf{$A$ is column irreducible}. 

We  consider two cases. In both cases we will use the notation  $C_k(H)$ for  the ACI-matrix obtained by deleting the column $k$ of the ACI-matrix $H$.   
\begin{itemize}
\item[1)]  $j\in \{1,\ldots,n-\rho_2\}$. As $A_{11}$ is minimal full rank then $\min\{\rank(C_j(A_{11}))\}<\rho_1$. So 
{\small
$$ \hspace{-8mm}
\min\big\{ \rank(C_j(A))\big\} = \min\big\{ \rank \begin{bmatrix}
C_j(A_{11}) & A_{12}\\
0 & A_{22}
\end{bmatrix} \big\}= \min\big\{\rank \begin{bmatrix}
C_j(A_{11}) \\
0 
\end{bmatrix}\big\} +\rho_2 =(\rho_1-1)+\rho_2=\rho-1.
$$}
\item[2)]  $j\in \{n-\rho_2+1,\ldots,n\}$. As $A_{22}$ is maximal full rank then $\rank\big(C_{j-(n-\rho_2)}(A_{22})\big)=\{\rho_2-1\}$. So we can apply Lemma~\ref{SC+FRC=constant-rank} to conclude that 
{\small $$
 \rank\big(C_j(A)\big) =  \rank \begin{bmatrix}
A_{11} & C_{j-(n-\rho_2)}(A_{12})\\
0 & C_{j-(n-\rho_2)}(A_{22})
\end{bmatrix}  = \{\rho_1 +(\rho_2-1)\}=\{\rho-1\}. 
$$}
\end{itemize}
From 1) and 2) we conclude that  $A$ is column irreducible.  

\item\textbf{$\scriptsize\begin{bmatrix}[cc|c] A_{11} &  A_{12} & u_1  \\  0 & A_{22} & u_2 \end{bmatrix}$ is never of constant rank $\rho+1$  for $\scriptsize\begin{bmatrix} u_1 \\ u_2 \end{bmatrix} \in \mathbb{F}^m$}.  

Since $A_{22}$ is maximal full rank then $\begin{bmatrix}[c|c] A_{22} & u_2\end{bmatrix}$ is not full rank. So $\begin{bmatrix}[c|c] A_{22} & u_2\end{bmatrix}$ has a completion $\renewcommand\arraystretch{1.1}\begin{bmatrix}[c|c]\widehat{A_{22}} & u_2\end{bmatrix}$ for which there exists a nonzero constant vector $\begin{bmatrix}w\\ \lambda \end{bmatrix} \in \mathbb{F}^{\rho_2+1}$  with $\lambda\neq 0$ such that: 
$$\renewcommand\arraystretch{1.1}\begin{bmatrix}[c|c]\widehat{A_{22}} & u_2\end{bmatrix} \begin{bmatrix}w\\ \lambda \end{bmatrix}=\mathbf{0} \in \mathbb{F}^{m-\rho_1}.$$ 
As $\begin{bmatrix}[c|c] A_{22} & u_2\end{bmatrix}$ and $\begin{bmatrix}[c|c] A_{12} & u_1\end{bmatrix}$ may share  variables then the completion of $\begin{bmatrix}[c|c] A_{22} & u_2\end{bmatrix}$ may force a partial completion of $\begin{bmatrix}[c|c] A_{12} & u_1\end{bmatrix}$ which we fully complete in any way we want to $\renewcommand\arraystretch{1.1}\begin{bmatrix}[c|c] \widehat{A_{12}} & u_1\end{bmatrix}$. Define the constant  vector  
\[a := \renewcommand\arraystretch{1.1}\begin{bmatrix}[c|c] \widehat{A_{12}} & u_1\end{bmatrix} \begin{bmatrix}w\\ \lambda \end{bmatrix}\in \mathbb{F}^{\rho_1}.
\]

Since $A_{11}$ is full rank, then for any completion $\widehat{A_{11}}$ there exists a constant vector $v \in \mathbb{F}^{n-\rho_2}$ such that $\widehat{A_{11}}\; v = -a.$

Finally,
\begin{align} \label{adjoin-Acolumn}
\renewcommand\arraystretch{1.5}\begin{bmatrix}[cc|c]
\widehat{A_{11}} & \widehat{A_{12}} & u_1\\
0    & \widehat{A_{22}} & u_2
\end{bmatrix}
\begin{bmatrix}
v \\
w \\
\lambda
\end{bmatrix} = 
\begin{bmatrix}
-a +a \\
\mathbf{0}
\end{bmatrix} = 
\begin{bmatrix}
\mathbf{0} \\
\mathbf{0}
\end{bmatrix}\in \mathbb{F}^{m}.
\end{align}
Note that  $\lambda\neq {0}$. 
Therefore $\scriptsize\renewcommand\arraystretch{1.5}\begin{bmatrix}[ccc]
 u_1\\
 u_2
\end{bmatrix}$ depends linearly of the columns of $\scriptsize\renewcommand\arraystretch{1.5}\begin{bmatrix}[ccc]
\widehat{A_{11}} & \widehat{A_{12}} \\
0    & \widehat{A_{22}} 
\end{bmatrix}$, thus  the completion $\scriptsize\begin{bmatrix}[cc|c] \widehat{A_{11}} & \widehat{A_{12}} & u_1  \\  0 & \widehat{A_{22}} & u_2 \end{bmatrix}$ has rank $\rho$.
 \end{itemize}
 \end{itemize}
\end{proof}

\subsection{Constructing Completely Irreducible ACI-matrices}\label{}

In Theorem~\ref{FixedRankIntoCD2} we have seen that if $A=\begin{smat} A_{11} & * \\ 0 & A_{22} \end{smat}$ where $A_{11}$ is minimal full rank and $A_{22}$ is maximal full rank then $A$ is completely irreducible. This takes us to consider the question of whether we can use the completely irreducible ACI-matrices which are full rank (square, minimal and maximal) as building blocks to construct new completely irreducible ACI-matrices. 

Namely, the question can be stated in the following terms: Is the ACI-matrix 
\[
A=\begin{bmatrix} A_{11} &  A_{12}  \\  0 & A_{22} \end{bmatrix}
\] 
 completely irreducible where $A_{11}$ and $A_{22}$ are either square, minimal or maximal full rank? We have nine different cases:

\begin{enumerate}[$(i)$]

\item If $A_{11}$  and $A_{22}$ are square full rank, then $A$ is completely irreducible.

Clearly $A$ is square full rank and, by Proposition~\ref{SFC}, it is completely irreducible.

\item If $A_{11}$ is square full rank and $A_{22}$ is minimal full rank, then $A$ is not always  completely irreducible. 

Consider the ACI-matrix over $\mathbb{F}_2$
\[
A=\begin{bmatrix} A_{11} &  A_{12}  \\  0 & A_{22} \end{bmatrix}=\left[
\begin{array}{cc|ccccc}
 1 & x_1 & 0 & 0 & 0 & 0 & 0 \\
 0 & 1 & y_1 & 0 & 0 & 1 & 0 \\ \hline
 0 & 0 & 1 & y_2 & y_3 & 0 & 0 \\
 0 & 0 & 0 & 0 & y_3 & y_4 & 1 \\
 0 & 0 & y_1 & 1 & 1 & 1 & y_5 \\
\end{array}
\right].
\]
It satisfies  the following facts:
\begin{enumerate}
\item  $A_{11}$ is square full rank.

\item $A_{22}$ is minimal full rank. First, that $A_{22}$ is full rank can be checked  directly since it has 5 variables and admits $32=2^5$ different completions, all of them of rank 3. Moreover,  if we delete any  of its five columns  the resulting $3\times 4$ ACI-matrix is not full rank since it admits a completion of rank 2. 

\item $A$ is full rank. It follows   from the structure of $A$  since any completion of  $A_{11}$ has rank 2 and any completion of $A_{22}$ has rank 3, so  any completion of $A$ has rank 5.
\item $A$ is column reducible. If we delete the second column of $A$ then we obtain an ACI-matrix $A'$ that  has 5 variables and  admits $32=2^5$ different completions, all of them of rank 5. So $A'$ is full rank, which implies that  $A$ is column reducible.
\end{enumerate}
Since $A$ is column reducible then $A$ is not irreducible, and thus $A$ is not completely irreducible.

\item If $A_{11}$ is square full rank and $A_{22}$ is maximal full rank, then $A$ is completely irreducible. 

That $A$ is maximal full rank was proved in~\cite[Lemma 2.2]{BoCa2}, and that $A$ is completely irreducible is a consequence of  Proposition~\ref{SFC}.

\item If $A_{11}$ is minimal full rank and $A_{22}$ is square full rank, then $A$ is completely irreducible. 

Consider the $m\times n$ ACI-matrix
\[
A=\begin{bmatrix} A_{11} &  A_{12}  \\  0 & A_{22} \end{bmatrix}
\] 
where $A_{11}$ is $\rho_1\times\theta_1$ minimal full rank (so it has constant rank $\rho_1$ and $\rho_1<\theta_1$) and $A_{22}$ is $\rho_2\times\rho_2$ square full rank (so it has constant rank $\rho_2$).  By Applying Lemma~\ref{SC+FRC=constant-rank} we have  that $A$ is full rank of constant rank $\rho_1+\rho_2=m$. 
Observe also that $m=\rho_1+\rho_2<\theta_1+\rho_2=n$. 

According to Proposition~\ref{SFC} the result follows if we prove that $A$ is minimal full rank. And in turn, according to Remark~\ref{mfrMfrCI} $(1)$, it is enough to show that the ACI-matrix obtained by deleting any column of $A$ admits a completion of rank $m-1$:
\begin{itemize}
\item If we delete one of its first $\theta_1$ columns then the resulting ACI-matrix is of type $\begin{smat} {A'_{11}} & {A_{12}} \\ 0& {A_{12}} \end{smat}$.  As $A_{11}$ is minimal full rank, then  there exists a completion  $\widehat{A'_{11}}$ of $A'_{11}$ whose rank is $\rho_1-1$. Extend this completion so that  $\begin{smat} \widehat{A'_{11}} & \widehat{A_{12}} \\ 0& \widehat{A_{12}} \end{smat}$ is a completion of $\begin{smat} {A'_{11}} & {A_{12}} \\ 0& {A_{12}} \end{smat}$. Then 
$$\rank \begin{bmatrix} \widehat{A'_{11}} & \widehat{A_{12}} \\ 0& \widehat{A_{12}} \end{bmatrix} = 
\rank \begin{bmatrix} \widehat{A'_{11}} & 0 \\ 0& \widehat{A_{12}} \end{bmatrix} =
\rank (\widehat{A'_{11}})   +
\rank (\widehat{A_{22}}) =(\rho_1-1)+\rho_2=m-1$$
\item If we delete one of its last $\rho_2$ columns then the resulting ACI-matrix is of type $\begin{smat} A_{11} & A'_{12}\\ 0 & A'_{22}\end{smat}$. The $\rho_2\times(\rho_2-1)$ ACI-matrix $A'_{22}$ has constant rank $\rho_2-1$.  By Lemma~\ref{SC+FRC=constant-rank} we have that $\begin{smat} A_{11} & A'_{12}\\ 0 & A'_{22}\end{smat}$ has constant rank $\rho_1+(\rho_2-1)=m-1$.
\end{itemize}

\item If $A_{11}$ and $A_{22}$ are minimal full rank, then $A$ is not always  completely irreducible.

Consider the ACI-matrix over $\mathbb{F}_2$
$$
A=\begin{bmatrix} A_{11} &  A_{12}  \\  0 & A_{22} \end{bmatrix}=
\left[
\begin{array}{ccc|ccccc}
x_1 & 1 & x_3 & 0 & 0 & 0 & 0 & 0 \\
x_1+1 & x_2 & 1 & y_1 & 0 & 0 & 1 & 0 \\ \hline
 0 & 0 & 0 & 1 & y_2 & y_3 & 0 & 0 \\
 0 & 0 & 0 & 0 & 0 & y_3 & y_4 & 1 \\
 0 & 0 & 0 & y_1 & 1 & 1 & 1 & y_5 \\
\end{array}
\right]
$$
We can check that $A$ is full rank since any completion has rank 5, that $A_{11}$ and $A_{22}$ are minimal full rank, and that $A$ is column reducible since the ACI-matrix obtained after deleting the first column of $A$ has constant rank 5. The  checks are similar to those on item $(ii)$. So $A$ is not completely irreducible.

\item If $A_{11}$ is minimal full rank and $A_{22}$ is maximal full rank, then $A$ is completely irreducible.

This is part  of  Theorem~\ref{FixedRankIntoCD2} and it is  quite surprising. Observe that Theorem~\ref{FixedRankIntoCD2} tell us that, up to equivalence, every completely irreducible ACI-matrices which is non-full rank can be constructed in this way. 

\item If $A_{11}$ is maximal full rank and $A_{22}$ is square full rank,  then $A$ is completely irreducible. 

In this case the position of $A_{11}$ and $A_{22}$ is interchanged with respect to the case $(iii)$ above. We can adapt easily the proof in~\cite[Lemma 2.2]{BoCa2} to show that $A$  is maximal full rank. And that $A$ is completely irreducible is a consequence of  Proposition~\ref{SFC}.

\item If $A_{11}$ is maximal full rank and $A_{22}$ is minimal full rank, then  $A$ is not always completely irreducible. 

Consider the ACI-matrix over $\mathbb{F}_2$
\[
A=\begin{bmatrix} A_{11} &  A_{12}  \\  0 & A_{22} \end{bmatrix}=\left[
\begin{array}{ccc|ccc}
 1 & 1 & 1 & 0 & 0 & 0 \\
 0 & 1 & x_1 & 0 & 0 & 0 \\
 x_2 & 0 & 1 & 0 & 0 & 0 \\
 0 & x_3 & 1 & 1 & 0 & 0 \\ \hline
 0 & 0 & 0 & y_1 & 1 & y_3 \\
 0 & 0 & 0 & y_1+1 & y_2& 1 \\
\end{array}
\right]
\]
It can be checked that  $A_{11}$ is maximal full rank,  that $A_{22}$ is minimal full rank, and that    some completions of $A$ have rank $5$ and  other completions of $A$ have rank $6$. Then $A$ has no  constant rank. So it can not be completely irreducible. 

\item If $A_{11}$ and $A_{22}$ are maximal full rank, then $A$ is completely irreducible. 

That $A$ is maximal full rank was proved  in~\cite[Lemma 2.3]{BoCa2}, and that $A$ is completely irreducible is a consequence of Proposition~\ref{SFC}.

\end{enumerate}

\noindent The following table summarizes all the possibilities:
\bigskip
\begin{equation}\label{Tabla}
\smash{\overset{A_{11}}{\overbrace{
    \begin{array}{| l | l | l | l } 
    \multicolumn{1}{c}{\text{Square FR}} & \multicolumn{1}{c}{\text{Minimal FR}} & \multicolumn{1}{c}{\text{Maximal FR}}  \\ \cline{1-3}
    \text{$(i)$ C.I. } & \text{$(iv)$ C.I.} & \text{$(vii)$  C.I.}  \\ \cline{1-3}
    \text{$(ii)$  Not always C.I.} & \text{$(v)$ Not always C.I.} & \text{$(viii)$ Not always C.I.}  \\ \cline{1-3}
    \text{$(iii)$ C.I. } & \text{$(vi)$ C.I.} & \text{$(ix)$ C.I. }  \\ \cline{1-3}
    \end{array}
}}}    
\begin{array}{l} 
      \\
\left. \begin{array}{l} 
    \text{Square FR}   \\ 
    \text{Minimal FR}  \\ 
    \text{Maximal FR}  \\ 
  \end{array} \right\} A_{22}
\end{array}
\end{equation}
\bigskip

\section{The core of a constant rank ACI-matrix}\label{PartialMatricesOfConstantRank}

%
%
%
%
%
We know that not all ACI-matrices of constant rank $\rho$ have a $\rho\times \rho$ submatrix of constant rank $\rho$. 
It could be expected  that each  ACI-matrix   of constant rank $\rho$ has at least  a submatrix of constant rank $\rho$ that is completely irreducible. But again this is not the case. Consider the $7\times 5$ partial matrix $E$  given in~(\ref{partialIrred->ACIred}): the unique submatrix of $E$ of constant rank 5 is $E$, and $E$ is not completely irreducible  since $E\sim F$ and $F$  is row reducible. 

So not all ACI-matrices of  constant rank $\rho$ have  a submatrix of constant rank $\rho$ that is completely irreducible. This removes one tool that could be employed in the calculus of  the rank of an ACI-matrix.  We can in some way offset this situation.

\begin{definition} 
Let $A$ be an ACI-matrix of constant rank $\rho$ over a field $\mathbb{F}$. If 
\begin{align*}
A\sim  \begin{bmatrix}[cccccc]
 A' & * \\
 *   & * \\ 
\end{bmatrix}
\end{align*}
where $A'$ is  completely irreducible of constant rank $\rho$ then  $A'$ is said to be a \bf{core} of  $A$.
\end{definition}

So, complete irreducibility allows  to generalize, in some way,   the concept of $\rho\times \rho$ submatrix of rank $\rho$. Given an ACI-matrix $A$ with constant rank $\rho$ it seems like a good idea to find a representative of its equivalence class that verifies that: it has a completely irreducible ACI-submatrix of rank $\rho$ (a core), and has a structure which is simple and makes it clear why it is of constant rank $\rho$. Theorem~\ref{ResultadosAnteriores3} finds such a representative, as we will see in the proof of the next result.

\begin{lemma}\label{Core}
Any  ACI-matrix of constant rank over a field  has a core.
\end{lemma}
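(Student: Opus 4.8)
The plan is to read a core directly off the normal form produced by Theorem~\ref{ResultadosAnteriores3}. We may assume $\rho\geq 1$, since an ACI-matrix of constant rank $0$ is the zero matrix and the void ($0\times 0$) ACI-matrix is vacuously a completely irreducible core of it. So fix $1\leq\rho\leq\min\{m,n\}$ and apply the detailed version of Theorem~\ref{ResultadosAnteriores3}, which puts $A$, up to equivalence, into one of the four shapes $(i)$--$(iv)$.

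The full-rank shapes give a core at once. If $\rho=m=n$, then $A\sim\begin{smat}1&&\ast\\ &\ddots&\\ 0&&1\end{smat}$ is square full rank, hence completely irreducible by Proposition~\ref{SFC}~$(i)$, and $A$ is its own core. If $\rho=m<n$, then $A\sim\begin{smat}B&\ast\end{smat}$ with $B$ either square full rank or minimal full rank of constant rank $\rho$; by Proposition~\ref{SFC} such a $B$ is completely irreducible, and $A\sim\begin{smat}B&\ast\end{smat}$ exhibits it as a core. The case $\rho=n<m$ is symmetric, using item $(iii)$ and the maximal-full-rank part of Proposition~\ref{SFC}: here $A\sim\begin{smat}\ast\\ C\end{smat}$, which a row and column permutation turns into $\begin{smat}C&\ast\\ \ast&\ast\end{smat}$, so $C$ is a core.

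Suppose now $\rho<\min\{m,n\}$, so one of the three sub-shapes of item $(iv)$ holds. In $(iv)(b)$, $A\sim\begin{smat}B&\ast\\ 0&0\end{smat}$ with $B$ square or minimal full rank of constant rank $\rho$, so $B$ (completely irreducible by Proposition~\ref{SFC}) is a core; $(iv)(c)$ is symmetric with $C$ square or maximal full rank, rearranged to $A\sim\begin{smat}C&0\\ \ast&0\end{smat}$. In $(iv)(a)$, $A\sim\begin{smat}B&\ast&\ast\\ 0&0&\ast\\ 0&0&C\end{smat}$ with $B$ square or minimal full rank of constant rank $\rho_1$ and $C$ square or maximal full rank of constant rank $\rho_2$, where $\rho_1+\rho_2=\rho$. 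Take $A'$ to be the submatrix sitting on the rows of $B$ together with the rows of $C$ and the columns of $B$ together with the columns of $C$, so $A'=\begin{smat}B&\ast\\ 0&C\end{smat}$. A block-row and block-column permutation of the displayed form moves the rows and columns of $B$ and $C$ to the front and gives $A\sim\begin{smat}A'&\ast\\ \ast&\ast\end{smat}$, so it only remains to see that $A'$ is completely irreducible of constant rank $\rho$. Its constant rank is $\rho_1+\rho_2=\rho$ by Lemma~\ref{SC+FRC=constant-rank} (note $B$ and $C$, being full rank, have constant ranks $\rho_1$ and $\rho_2$), and its complete irreducibility is already known: Proposition~\ref{SFC} covers the square/square combination, Theorem~\ref{FixedRankIntoCD2} covers the minimal/maximal combination, and the square/maximal and minimal/square combinations are the completely irreducible entries of the table~\eqref{Tabla} in Section~\ref{completelyirreduciblefullrank}.

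So the argument is pure bookkeeping over Theorem~\ref{ResultadosAnteriores3}; the only step carrying real content is case $(iv)(a)$, and there everything needed is supplied by Theorem~\ref{FixedRankIntoCD2} and the constructions of Section~\ref{completelyirreduciblefullrank}. I expect the only thing to watch is organizational: confirming that in each of the shapes $(i)$--$(iv)$ the claimed core indeed appears as a leading submatrix up to a row/column permutation, and that the degenerate instances (void block rows or block columns) are all accounted for.
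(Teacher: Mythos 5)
Your proposal is correct and follows essentially the same route as the paper's own proof: read the core off the normal form of Theorem~\ref{ResultadosAnteriores3}, using Proposition~\ref{SFC} in the full-rank and $(iv)(b)$, $(iv)(c)$ cases, and in case $(iv)(a)$ permuting rows and columns so that $\begin{smat} B & * \\ 0 & C \end{smat}$ becomes a leading submatrix, whose constant rank comes from Lemma~\ref{SC+FRC=constant-rank} and whose complete irreducibility comes from the table~(\ref{Tabla}) (equivalently Proposition~\ref{SFC}, the constructions of Section~4.3, and Theorem~\ref{FixedRankIntoCD2}). The only additions are cosmetic: the explicit treatment of $\rho=0$ and the remark about permuting $C$ to the leading position, both of which the paper leaves implicit.
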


\begin{proof} Let  $A$ be a  $m\times n$ ACI-matrix with  constant rank $\rho$.  We have several possibilities: 
\begin{enumerate}[$(i)$]

\item If $\rho = m=n$ then, by Propositition~\ref{SFC}, $A$ is completely irreducible.  So $A$ is a core of $A$. 

We can find a core of $A$ with a simpler structure. According to  Theorem~\ref{ResultadosAnteriores3}, $A\sim \begin{smat} \vspace{-2mm} 1 & & * \\   & \ddots &  \\ 0 & & 1 \end{smat}$.  So $\begin{smat} \vspace{-2mm} 1 & & * \\   & \ddots &  \\ 0 & & 1 \end{smat}$, which is  completely irreducible,  is also a core of $A$. 

\item  If $ \rho = m<n$ then, according to  Theorem~\ref{ResultadosAnteriores3}, $A \sim \small{\begin{bmatrix}  B & * \, \end{bmatrix}}$ where $B=\begin{smat} \vspace{-2mm} 1 & & * \\   & \ddots &  \\ 0 & & 1 \end{smat}$ of size $m\times m$ or $B$ is  minimal full rank of size $m\times n'$ with $m<n'\leq n$. In any case $B$ is completely irreducible with constan rank $\rho$ (see Proposition~\ref{SFC}). So $B$ is a core of $A$. 

\item  If $ \rho = n<m$ then, according to  Theorem~\ref{ResultadosAnteriores3}, $A \sim {\begin{smat}  * \\ \\   C   \end{smat}}$ where $C=\begin{smat} \vspace{-2mm} 1 & & * \\   & \ddots &  \\ 0 & & 1 \end{smat}$ of size $n\times n$ or $C$ is  maximal full rank of size $m'\times n$ with $n<m'\leq m$. In any case $C$ is completely irreducible with constant rank $\rho$ (see Proposition~\ref{SFC}). So $C$ is a core of $A$.

\item If  $ \rho <  \min \{m, n\}$   then, according to  Theorem~\ref{ResultadosAnteriores3},  we have three possibilities:
\begin{enumerate}[$(a)$]
\item For some positive integers $r<m$ and $s<n$ with $\rho=(m-r)+(n-s)$ we have
$$A\sim \small{\left[\begin{array}{cc|c}
B & * & *  \\ \hline
\multicolumn{2}{c|}{\multirow{2}{*}{$0_{r\times s}$}} & * \\ 
 & & C
\end{array}\right]} 
$$
where  $B=\begin{smat} \vspace{-2mm} 1 & & * \\   & \ddots &  \\ 0 & & 1 \end{smat}$ or $B$ is minimal full rank  and  $C=\begin{smat} \vspace{-2mm} 1 & & * \\   & \ddots &  \\ 0 & & 1 \end{smat}$ or $C$ is maximal full rank. By permuting some rows and some columns of the last ACI-matrix we have that 
$$ \small{\left[\begin{array}{cc|c}
B & * & *  \\ \hline
\multicolumn{2}{c|}{\multirow{2}{*}{$0_{r\times s}$}} & * \\ 
 & & C
\end{array}\right]} \sim  
\small{\left[\begin{array}{cc|c}
B & * & *  \\ 
0 & C & 0 \\ \hline
0 & * & 0
\end{array}\right]}
$$
According to the table given in~(\ref{Tabla}) the ACI-matrix  $\begin{smat}B & * \\ 0 & C \end{smat}$ is completely irreducible (the possible cases  are those corresponding to items $(i), (iii), (iv)$ or $(vi)$)  with constant rank $(m-r)+(n-s)$ (see  Lemma~\ref{SC+FRC=constant-rank}), then it  is a core of $A$.

\item For some positive integer $r<m$  with $\rho=m-r$ we have
$A\sim \small{\left[\begin{array}{cc}
B & *   \\ \hline
\multicolumn{2}{c|}{\multirow{1}{*}{$0_{r\times n}$}}  
\end{array}\right]} 
$
where  $B=\begin{smat} \vspace{-2mm} 1 & & * \\   & \ddots &  \\ 0 & & 1 \end{smat}$ or $B$ is minimal full rank.  As  $B$ is completely irreducible  with constant rank $m-r$ then $B$  is a core of $A$.

\item For some positive integer  $s<n$ with $\rho=n-s$ we have
$A\sim \small{\left[\begin{array}{cc|c}
\multicolumn{2}{c|}{\multirow{2}{*}{$0_{m\times s}$}} & * \\ 
 & & C
\end{array}\right]} 
$
where   $C=\begin{smat} \vspace{-2mm} 1 & & * \\   & \ddots &  \\ 0 & & 1 \end{smat}$ or $C$ is maximal full rank. As $C$  is completely irreducible with constant rank $n-s$  then $C$  is a core of $A$.
 
\end{enumerate}
\end{enumerate}
\end{proof}

Observe that Lemma~\ref{Core} for  ACI-matrices of constant rank has  analogy  with  Theorem~\ref{PartialSubmatrices}  for partial matrices of constant rank.

\subsubsection*{Acknowledgements}
We thank the referee, his comments and questions have prompted us to carry out a careful revision of a previous version of this paper. 

\bibliographystyle{plain}

\begin{thebibliography}{1}

\bibitem{BoCa1}
A. Borobia, R. Canogar,
\newblock Nonsingular ACI-matrices over integral domains.
\newblock {\em Linear Algebra Appl.}, 436:4311--4316, 2012.

\bibitem{BoCa2}
A. Borobia, R. Canogar,
\newblock Characterization of full rank ACI-matrices over fields.
\newblock {\em Linear Algebra Appl.}, 439:3752--3762, 2013.

\bibitem{MR2680270}
R. Brualdi, Z. Huang, X. Zhan,
\newblock Singular, nonsingular, and bounded rank completions of ACI-matrices.
\newblock {\em Linear Algebra Appl.}, 433:1452--1462, 2010.



\bibitem{MR2775784}
Z. Huang, X. Zhan,
\newblock ACI-matrices all of whose completions have the same rank,
\newblock {\em Linear Algebra Appl.}, 434:1956--1967, 2011.

\bibitem{McTigueTesis}
J. McTigue, 
\newblock Completion of partial matrices.
\newblock {\em Thesis}, The National University of Ireland, 2015.


\bibitem{McTigueQuinlan}
J. McTigue, R. Quinlan,
\newblock Partial matrices whose completions have ranks bounded below.
\newblock {\em Linear Algebra Appl.}, 435:2259--2271, 2011.

\bibitem{McTigueQuinlan2}
J. McTigue, R. Quinlan,
\newblock Partial matrices whose completions all have the same rank.
\newblock {\em Linear Algebra Appl.}, 438:348--360, 2013.

\bibitem{McTigueQuinlan3}
J. McTigue, R. Quinlan,
\newblock Partial matrices of constant rank.
\newblock {\em Linear Algebra Appl.}, 446:177--191, 2014.

\end{thebibliography}

\end{document}